\documentclass[a4paper,11pt]{amsart}
\usepackage[top=30 mm,bottom=30 mm,right=25 mm,left=25 mm]{geometry}

\usepackage{graphicx, mathrsfs,amsmath}
\usepackage[all,poly]{xy}
\usepackage{amscd,amsmath,amsthm,amsfonts,latexsym,amssymb, mathrsfs,stmaryrd,graphics,pst-all,amscd,tkz-euclide,yfonts,mathtools,tikz,graphicx,wrapfig, amsaddr}
\usepackage{amsfonts}
\usepackage{amssymb}
\usepackage{amsmath}
\usepackage{color}
\usepackage[colorlinks]{hyperref}
\usepackage{enumerate}
\scrollmode    
\theoremstyle{plain}
\newtheorem{theorem}{Theorem}[section]
\newtheorem{lemma}[theorem]{Lemma}
\newtheorem{corollary}[theorem]{Corollary}
\newtheorem{proposition}[theorem]{Proposition}

\theoremstyle{definition}

\newtheorem{example}[theorem]{Example}

\newtheorem{remark}[theorem]{Remark}

\parindent1cm

\numberwithin{equation}{section}

\newcommand{\forget}[1]{}
\begin{document}
	\emergencystretch 3em
	\begin{abstract} We define the notion of dextral symmetric algebras (not necessarily associative), motivated by the idea of symmetric rings. We derive a complete classification of dextral symmetric algebras of Leavitt path algebras, and right Leibniz algebras up to dimension $4$. We also obtain that a finite-dimensional dextral symmetric right Leibniz algebra is solvable if and only if it satisfies a weaker notion of nilpotency.
	\end{abstract}
	\title[On Dextral Symmetric Algebra]{on dextral symmetric algebras}
	\author[Dutta and Bynnud]{Dimpy M. Dutta \and Shanborlang Bynnud}
	\address{Department of Mathematics, North-Eastern Hill University, Shillong, India.}
	\email{dimpymdutta@gmail.com, shanbynnud@gmail.com (Corresponding author)}
	\keywords{dextral symmetric, symmetric rings, Leibniz algebra, Left nilpotent right Leibniz algebra, solvable Leibniz algebra, Leavitt path algebra}
	\subjclass[2020]{17A32, 16S88, 17A01}
	\maketitle
	\section{Introduction}
	Symmetric rings, introduced by Lambek in \cite{lambek} as a subclass of the class of associative rings with unity, undoubtedly play an important role in the domain of ring and module theory. Along with the extensive studies that have been conducted on symmetric rings, algebraists have also introduced many generalizations of these rings (see, for example, \cite{jung, greg, kafkas, huh}). This motivated us to extend this notion to various algebras (not necessarily associative) by introducing the notion of {\em dextral symmetric} algebras.
	
	Recall that an ideal $I$ of an associative ring $R$ is called {\it symmetric} if for any $x,y,z \in R$, $xyz \in I$ implies $xzy \in I$. Lambek called an associative ring with unity {\it symmetric} if the zero ideal is symmetric. We generalize the definition of symmetricity of an ideal of a ring to an ideal $I$ of an algebra $A$ by calling it a {\it symmetric ideal } if it has the property that if  $a(bc) \in I$, then $b(ac ) \in I$, for any $a,b,c \in A$. We call an algebra $A$ (not necessarily associative) a {\em dextral symmetric} algebra if the zero ideal of $A$ is symmetric. Further, an ideal of an algebra is called a {\it dextral symmetric ideal} if it is dextral symmetric as a subalgebra. It is worth mentioning here that in associative algebras with unity, both the symmetric and dextral symmetric conditions are equivalent.
	
	The results on symmetric rings intrigued us to explore whether those results hold in a broader context as well. For example, in a symmetric ring $R$, we have $r_1r_2\dots r_n=0$ implies $ r_{\sigma (1)}r_{\sigma (2)} \dots r_{\sigma (n)}=0$, where $\sigma $ is any permutation of the set $\{1,2,\dots ,n \}$, $r_i \in R$ and $n \in \mathbb N$ \cite[Lemma 1.2]{jung}. Although this result may not be readily true in the case of a non-associative dextral symmetric algebra, in Proposition \ref{sym} and Theorem \ref{sym2}, we observe a similar result for dextral symmetric Leibniz algebras. This further prompted us to introduce another subclass of the class of right Leibniz algebras called the class of left nilpotent right Leibniz algebra. We show that there is a close connection between solvability, dextral symmetricity, and left nilpotency within the class of right Leibniz algebras. 
	
	Another observation is that in the class of anti-commutative algebras, dextral symmetric algebras are, in fact, equivalent to the CB-algebras introduced by Saha and Towers in \cite{ripan}. Recall here that an algebra $A$ is called anti-commutative if for each $x \in A$, $x^2=0$. This also implies that in such an algebra $A$, all $x,y \in A$ satisfy $xy=-yx$. We remind here that an algebra $A$ is called a CB-algebra (as in \cite{ripan}) if whenever $xy=0$ for any $x,y \in A$, we have $(xz)y=0$ for all $z\in A$. In anti-commutative algebras, the CB-algebras display an interesting property: namely, that every centralizer of these algebras is an ideal (see \cite{ripan}).
	
	In Section \ref{sec1}, we document some examples of dextral symmetric algebras in the context of (anti)commutative and (anti)associative algebras. As mentioned in the previous paragraph, we show that in anti-commutative algebras, the notions of dextral symmetric algebras, anti-associative algebras, and CB-algebras are equivalent. In Section \ref{sec2}, we consider the class of Leavitt path algebras and determine the graphs over which a Leavitt path algebra over a commutative ring with unity will be dextral symmetric. This leads to the complete classification of dextral symmetric Leavitt path algebras. Note that a Leavitt path algebra of a graph over a commutative ring with unity is associative. Section \ref{sec3} focuses on results concerning right Leibniz algebras (which are non-commutative, non-associative algebras) that are dextral symmetric. We also introduce a weaker form of nilpotency for a right Leibniz algebra, namely {\em left nilpotency}, and establish an equivalency between a solvable right Leibniz algebra and a left nilpotent right Leibniz algebra through dextral symmetricity. Finally, in Section \ref{classification}, we classify the solvable dextral symmetric right Leibniz algebras up to dimension 4.
	
	\section{Dextral symmetric algebras}\label{sec1}
	In this section, we state some basic properties of dextral symmetric algebras and provide examples based on the commutativity and associativity of the algebra. We also show that in the class of anti-commutative algebras, the classes of dextral symmetric, anti-associative, and CB algebras coincide.  
	\begin{remark}\label{basic properties of dextral symmetric algebras}
		Every subalgebra of a dextral symmetric algebra is dextral symmetric. Moreover, dextral symmetric algebras are preserved under finite direct sums. Thus, every algebra has a largest dextral symmetric ideal. A homomorphic image of a dextral symmetric algebra under an injective homomorphism is also dextral symmetric.
	\end{remark}
	
	It is natural to ask whether the quotient algebra of a dextral symmetric algebra is also dextral symmetric or not. To answer that, let us first look at an example of a class of dextral symmetric algebras.
	
	\begin{example}\label{free}
		Any free algebra $\mathbb F\langle x_1, \dots , x_n\rangle$ (not necessarily associative) over a field $\mathbb F$ is dextral symmetric. Because in a free algebra over a field, the product of any two elements is zero if and only if at least one of the elements is zero.
	\end{example}
	
	The above example helps us in answering the previously asked question in the form of the following example.
	
	\begin{example}\label{notdextral}
		The quotient algebra of a dextral symmetric algebra may not be dextral symmetric. For instance, consider an associative free algebra $A$ over a field $\mathbb F$ generated by three elements $x,y,z$. Take $I$ to be an ideal of $A$ generated by the product $x(yz)=xyz$. Then, $A/I$ is not dextral symmetric as $x(yz)=\bar{0}$ in $A/I$ however, $y(xz)=yxz \neq \bar{0}$ in $A/I$.
	\end{example}
	However, we have the following proposition. The proof is immediate from the definitions.
	\begin{proposition}
		Let $A$ be an algebra and let $I$ be its ideal. Then, $A/I$ is a dextral symmetric algebra if and only if $I$ is symmetric.
	\end{proposition}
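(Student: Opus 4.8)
The plan is to exploit the canonical projection $\pi \colon A \to A/I$, $a \mapsto \bar a$, which is a surjective homomorphism of algebras. As such, it respects products, $\pi(x)\pi(y) = \pi(xy)$ for all $x,y \in A$, and its kernel is exactly $I$, so that $\pi(x) = \bar 0$ if and only if $x \in I$. These two facts are all that is needed: they let me translate the dextral symmetric condition on the quotient directly into a statement about elements of $A$ modulo $I$.

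For the forward direction, I would suppose $A/I$ is dextral symmetric and take $a,b,c \in A$ with $a(bc) \in I$. Applying $\pi$ and using that it is a homomorphism gives $\bar a(\bar b \bar c) = \overline{a(bc)} = \bar 0$. Dextral symmetry of $A/I$ then yields $\bar b(\bar a \bar c) = \bar 0$, i.e.\ $\overline{b(ac)} = \bar 0$, which by the description of the kernel means $b(ac) \in I$. This is precisely the defining condition for $I$ to be symmetric.

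For the converse, I would suppose $I$ is symmetric. Here the one point worth keeping straight is that the dextral symmetric condition on $A/I$ quantifies over elements of $A/I$, not of $A$; this is handled by surjectivity of $\pi$, since every element of $A/I$ has the form $\bar a$ for some $a \in A$. Thus, given $\bar a(\bar b \bar c) = \bar 0$ in $A/I$, I rewrite the left side as $\overline{a(bc)}$, conclude $a(bc) \in I$, invoke symmetry of $I$ to obtain $b(ac) \in I$, and read this back as $\bar b(\bar a \bar c) = \overline{b(ac)} = \bar 0$. Hence $A/I$ is dextral symmetric.

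There is essentially no genuine obstacle: the entire argument is a transport of the defining implication across the homomorphism $\pi$. The only things to match carefully are the quantifier ranges between $A$ and $A/I$, which surjectivity of $\pi$ resolves, and the identity $\ker \pi = I$, which turns the two membership conditions "$\in I$" into the two vanishing conditions "$= \bar 0$". Since the equivalence holds termwise on each triple $(a,b,c)$, no induction or auxiliary construction is required, which is why the statement is immediate from the definitions.
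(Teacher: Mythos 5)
Your argument is correct and is exactly the routine unfolding of definitions that the paper has in mind when it states that the proof is immediate: transporting the implication across the canonical projection $\pi$ using $\ker\pi = I$ and surjectivity. Nothing further is needed.
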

	
	\begin{remark}
		The injective hypothesis of remark \ref{basic properties of dextral symmetric algebras} cannot be relaxed, as can be seen by taking the natural projection from the free algebra $A$ of Example \ref{notdextral} onto the algebra $A/I$.
	\end{remark}
	\begin{proposition}\label{Comass}
		An algebra $A$ is dextral symmetric if it has any of the following four properties:
		\begin{enumerate}[{\rm (i)}]
			\item Commutative and associative.
			\item Commutative and anti-associative.
			\item Anti-commutative and associative.
			\item Anti-commutative and anti-associative.
		\end{enumerate}
	\end{proposition}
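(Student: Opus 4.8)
The plan is to prove the single implication defining dextral symmetry, namely that $a(bc)=0$ forces $b(ac)=0$, by rewriting $b(ac)$ as a scalar multiple of $a(bc)$ in each of the four cases. First I would introduce two signs that uniformly encode the hypotheses: let $\epsilon_1=+1$ when $A$ is commutative and $\epsilon_1=-1$ when $A$ is anti-commutative (so that $xy=\epsilon_1 yx$ for all $x,y\in A$), and let $\epsilon_2=+1$ when $A$ is associative and $\epsilon_2=-1$ when $A$ is anti-associative (so that $(xy)z=\epsilon_2\, x(yz)$ for all $x,y,z\in A$). Each of the four items (i)--(iv) then corresponds to a choice of the pair $(\epsilon_1,\epsilon_2)\in\{+1,-1\}^2$.

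With this notation, the computation is a three-step rearrangement applied to $b(ac)$. I would first swap the outer factors using the (anti)commutativity relation, writing $b(ac)=\epsilon_1\,(ac)b$; then reassociate using the (anti)associativity relation, $(ac)b=\epsilon_2\, a(cb)$; and finally swap the inner factors again by (anti)commutativity, $a(cb)=\epsilon_1\, a(bc)$. Composing these three identities gives
\[
b(ac)=\epsilon_1\,\epsilon_2\,\epsilon_1\, a(bc)=\epsilon_1^{\,2}\,\epsilon_2\, a(bc)=\epsilon_2\, a(bc),
\]
where the last equality uses $\epsilon_1^{\,2}=1$. Hence in every one of the four cases $b(ac)=\pm a(bc)$, so the hypothesis $a(bc)=0$ immediately yields $b(ac)=0$, which is exactly the dextral symmetric condition.

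There is no real obstacle here; the argument is a routine but careful bookkeeping of signs, and the only point worth flagging is that the dependence on whether $A$ is commutative or anti-commutative drops out, because the commutativity sign $\epsilon_1$ is used an even number of times, leaving only the (anti)associativity sign $\epsilon_2$. For completeness one could instead verify the four cases separately by substituting the four sign pairs, but the unified calculation above handles all of them at once and makes transparent why the conclusion is insensitive to the commutativity convention.
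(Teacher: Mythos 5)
Your proof is correct and takes essentially the same approach as the paper: the paper verifies case (i) by the chain $y(xz)=(yx)z=(xy)z=x(yz)$ and declares the other cases similar, while you carry out the same rewriting uniformly with signs $\epsilon_1,\epsilon_2$ to show $b(ac)=\pm\,a(bc)$ in all four cases at once. The unified sign bookkeeping is a tidy way of making the paper's ``the rest are similar'' explicit, but it is not a different argument.
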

	\begin{proof}
		We only give the proof for ${\rm (i)}$ as the rest are similar. Let $x,y,z \in A$ such that $x(yz)=0$. Then
		\[ y(xz)=(yx)z=(xy)z=x(yz)=0.\]
	\end{proof}
	
	\begin{proposition}
		For an anti-commutative algebra $A$, the following are equivalent:
		\begin{enumerate}[{\rm (1)}]
			\item $A$ is a dextral symmetric algebra.
			\item $A$ is anti-associative.
			\item $A$ is a CB-algebra.
		\end{enumerate}
	\end{proposition}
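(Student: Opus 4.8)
The plan is to establish the three equivalences through the chain $(2) \Rightarrow (1)$, $(1) \Rightarrow (2)$, $(2) \Rightarrow (3)$ and $(3) \Rightarrow (2)$, using throughout the two consequences of anti-commutativity: $x^2 = 0$ and $xy = -yx$ for all $x, y \in A$. The implication $(2) \Rightarrow (1)$ needs no new work, since it is precisely case (iv) of Proposition \ref{Comass}: an algebra that is both anti-commutative and anti-associative is dextral symmetric.

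The two genuinely delicate implications are $(1) \Rightarrow (2)$ and $(3) \Rightarrow (2)$, because in each case I must extract the \emph{unconditional} identity $(xy)z = -x(yz)$ from a hypothesis that only constrains products which already vanish. The main obstacle, and the key idea that resolves both, is to feed a \emph{trivially} vanishing product into the conditional hypothesis. For $(1) \Rightarrow (2)$: since $b \cdot b = 0$, the product $a(bb)$ is automatically zero, so dextral symmetry (applied with $c = b$) yields $b(ab) = 0$ for all $a, b$. I would then linearize this in $b$ (replace $b$ by $b + c$ and use $b(ab) = 0 = c(ac)$ to cancel the diagonal terms), obtaining the bilinear identity $b(ac) + c(ab) = 0$, equivalently $p(qr) = -r(qp)$. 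Combining this with anti-commutativity, via $r(qp) = -(qp)r$ and $(qp)r = -(pq)r$, collapses it to $p(qr) = -(pq)r$, which is exactly anti-associativity.

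For $(3) \Rightarrow (2)$ I would run the parallel argument: since $x \cdot x = 0$, the CB-property applied to the pair $(x,x)$ gives $(xz)x = 0$ for all $x, z$; linearizing in $x$ produces $(xz)y + (yz)x = 0$, and pushing this through anti-commutativity again yields $(xy)z = -x(yz)$. The remaining implication $(2) \Rightarrow (3)$ is a short direct computation: assuming $xy = 0$, anti-associativity and anti-commutativity give $(xz)y = -x(zy) = x(yz) = -(xy)z = 0$, which is the CB-condition. Together these four implications close the loop and prove $(1) \Leftrightarrow (2) \Leftrightarrow (3)$. I expect the linearization together with the anti-commutativity bookkeeping in the two hard implications to be the only place where care is required; everything else is immediate.
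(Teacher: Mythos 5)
Your proposal is correct, and all the steps check out, but it takes a genuinely different route from the paper. The paper closes the loop as $(1)\Rightarrow(3)\Rightarrow(2)\Rightarrow(1)$: it proves $(1)\Rightarrow(3)$ by a short chain of applications of dextral symmetry and anti-commutativity ($z(xy)=0$ gives $x(zy)=0$, hence $x(yz)=0$, hence $y(xz)=0$, hence $(xz)y=0$), it outsources the hard implication $(3)\Rightarrow(2)$ to Saha--Towers \cite[Theorem~3.7]{ripan}, and it gets $(2)\Rightarrow(1)$ from Proposition \ref{Comass}(iv), exactly as you do. You instead prove $(1)\Leftrightarrow(2)$ and $(2)\Leftrightarrow(3)$ directly, and your key device --- feeding the trivially vanishing products $a(bb)=0$ and $xx=0$ into the conditional hypotheses to extract the unconditional diagonal identities $b(ab)=0$ and $(xz)x=0$, then polarizing ($b\mapsto b+c$, resp.\ $x\mapsto x+y$) to get the bilinear identities $b(ac)+c(ab)=0$ and $(xz)y+(yz)x=0$ --- does not appear in the paper at all. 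The payoff is that your argument is entirely self-contained (no appeal to \cite{ripan} is needed for $(3)\Rightarrow(2)$), and the linearization is characteristic-free since no division by $2$ occurs. The cost is that you prove four implications where the paper proves three, and the paper's $(1)\Rightarrow(3)$ is arguably slicker as a one-line chain. Both arguments are valid; yours is the more elementary and more complete one.
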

	\begin{proof}
		$(1) \implies (3)$. Let $A$ be a dextral symmetric algebra and $x,y \in A$ such that $xy=0$. Then for all $z \in A$,
		\[z(xy)=0 \implies x(yz)=-x(zy)=0 \implies (xz)y=-y(xz)=0.\]
		$(3) \implies (2)$ follows from \cite[Theorem~3.7]{ripan}, while $(2) \implies (1)$ follows from (iv) of Proposition \ref{Comass}.
	\end{proof}
	
	Proposition \ref{Comass} shows that the study of dextral symmetric algebras becomes relevant when we consider the class of algebras that lacks at least one of the following properties: associative, anti-associative, commutative or anti-commutative. Therefore, in the following two sections, we chose to study the property of dextral symmetricity in two classes of algebras, one of which is associative but not commutative/anti-commutative and the other which is neither (anti) associative nor (anti) commutatative.
	
	\section{Classification of dextral symmetric Leavitt path algebras}\label{sec2}
	In this section we consider a Leavitt path algebra over a unital commutative ring $R$. We first characterize the graph associated to a Leavitt path algebra when it is dextral symmetric. We begin by mentioning some basic definitions concerning this class of algebras that are helpful to us in establishing our aim. Finally, we provide a complete classification of dextral symmetric Leavitt path algebras with coefficients taken from a unital commutative ring.
	
	By referring to a graph in this paper, we mean a directed graph $E:=(E^0, E^1, r,s),$ where $E^0$ and $E^1$ are countable sets of vertices and edges respectively. The map $r,s: E^1 \rightarrow E^0$ carries each edge to its range and source, respectively.
	
	A loop is an edge $e \in E^1$ such that $s(e)=r(e)$. A path in a graph $E$ of length $n$ is a sequence of $n$ edges given by $\beta:=e_1e_2 \dots e_n$, where $r(e_i)=s(e_{i+1})$ for $i \in \{1, \dots n-1 \}.$ The vertices in $E^0$ are considered to be paths of length $0$. We denote the set of all finite paths by $E^*:= \cup _{n=0}^{\infty} E^n$, where $E^n$ denotes the set of all paths of length $n$. 
	
	For an edge $e \in E^1$ with $s(e)=u,~ r(e)=v$, where $u,v \in E^0$, we have a corresponding ghost edge $e^*$ such that $s(e^*)=v, ~r(e^*)=u.$ We denote the collection of all such ghost edges by $(E^1)^*$. 
	
	Now we give the formal definition of a Leavitt path algebra over a unital commutative ring $R$. 
	
	The {\em Leavitt path algebra} of an arbitrary directed graph $E$ over a unital commutative ring $R$, denoted by $L_R(E)$ is an $R$-algebra generated by a set of pairwise orthogonal idempotent elements $\{v ~ \vert ~ v \in E^0\}$ and a set $\{e,e^* ~ \vert ~ e\in E^*\}$ of edges and ghost edges of $E$ satisfying the following conditions :
	\begin{enumerate}[(1)]
		\item $s(e)e = e = er(e)$ for all $e \in E^{1}$.
		\item $r(e)e^{*} = e^{*} = e^{*}s(e)$ for all $e \in E^{1}$.
		\item (The CK-1 relations) For all $e, f \in E^{1},\ e^{*}e = r(e)$ and    $e^{*}f = 0$ if $e \neq f$.
		\item (The CK-2 relations) For every regular vertex $v \in E^{0}$, 
		\[v = \sum\limits_{e \in E^{1},\ s(e)=v} ee^{*}.\]
	\end{enumerate}
	
	\begin{remark}\label{nonzero}
		\begin{enumerate}[(1)]
			\item Note that all the elements of the set $\{v,e,e^* ~ \vert ~ v \in E^0, ~ e \in E^1\}$ are non-zero in $L_R(E)$ \cite[Proposition~3.4]{tomforde}. 
			\item Every element $x \in L_R(E)$ can be expressed as $x =\sum_{j=1}^{m}l_{j}\alpha_{j}\gamma_{j}^{*} $, where $l_{i} \in R$, $\alpha_{j}$, $\gamma_{j} \in E^*$ and $m$ is a suitable integer.
		\end{enumerate}   
	\end{remark}
	
	\begin{lemma}\label{sourcerange}
		In a dextral symmetric Leavitt path algebra the associated graph has no edge with distinct source and range.
	\end{lemma}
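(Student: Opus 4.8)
The plan is to argue by contraposition: I would assume the associated graph $E$ has an edge $e \in E^1$ with distinct source and range, say $u := s(e) \neq r(e) =: v$, and then produce a triple of elements witnessing the failure of the defining implication $a(bc) = 0 \Rightarrow b(ac) = 0$ of a dextral symmetric algebra.

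First I would collect the structural identities needed. Since the vertices $\{w \mid w \in E^0\}$ are pairwise orthogonal idempotents and $u \neq v$, we have $uv = 0$. Relation $(1)$ supplies the absorption rules $s(e)e = e$ and $er(e) = e$, that is, $ue = e$ and $ev = e$. Finally, by Remark \ref{nonzero}(1) the edge $e$ is a nonzero element of $L_R(E)$.

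With these in hand I would simply take $a = e$, $b = u$, and $c = v$. Then $bc = uv = 0$, so $a(bc) = e \cdot 0 = 0$; on the other hand $ac = ev = e$, whence $b(ac) = ue = e \neq 0$. Thus $a(bc) = 0$ while $b(ac) \neq 0$, so $L_R(E)$ is not dextral symmetric, and contraposition yields the lemma.

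The argument involves no real difficulty once the witnesses are chosen; the only step requiring thought is selecting $a$, $b$, $c$ so that the two relevant triple products fall on opposite sides of the absorption relations, one product annihilated through the orthogonality $uv = 0$ and the other preserved by $ue = e$. This asymmetry between $uv = 0$ and $ue = e$ (equivalently, between $ve = 0$ and $ev = e$) is precisely what encodes the hypothesis $s(e) \neq r(e)$, and it is exactly the configuration ruled out by the dextral symmetric axiom.
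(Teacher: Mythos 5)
Your proposal is correct and coincides with the paper's own argument: the paper likewise takes the witnesses $a=e$, $b=s(e)$, $c=r(e)$, noting $e(s(e)r(e))=0$ by orthogonality of distinct vertices while $s(e)(er(e))=e\neq 0$ by Remark \ref{nonzero}. No substantive difference.
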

	\begin{proof}
		Let us prove the lemma by contradiction. Consider a dextral symmetric Leavitt path algebra $L_R(E)$. Suppose, that the graph $E$ contains an edge $e\in E^1$ such that $s(e)\neq r(e)$. Then we would have $es(e)r(e)=0$. However, $s(e)er(e)=e \neq 0$ (as mentioned in Remark \ref{nonzero}). This contradicts our assumption that $L_R(E)$ is dextral symmetric. Thus, $E$ contains no edge with distinct source and range.
	\end{proof}
	
	\begin{lemma}\label{leavlem1}
		In a dextral symmetric Leavitt path algebra, the associated graph contains no vertex emitting more than one loop.
	\end{lemma}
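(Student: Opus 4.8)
The plan is to argue by contradiction, exploiting the fundamental asymmetry between two distinct loops created by the CK-1 relations: although $f^*f = r(f)$ is a nonzero vertex, one has $e^*f = 0$ as soon as $e \neq f$. Concretely, suppose some vertex $v$ emits two distinct loops $e, f \in E^1$, so that $s(e) = r(e) = s(f) = r(f) = v$. I would then test the dextral symmetric condition on the carefully chosen triple $a = e^*$, $b = f$, $c = v$, and derive a contradiction.

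First I would compute the inner products using the defining relations of $L_R(E)$. Since $f = fr(f) = fv$ by relation $(1)$, we get $bc = fv = f$, so that $a(bc) = e^*(fv) = e^*f$; and this vanishes by the CK-1 relations because $e \neq f$. Thus $a(bc) = 0$, and the dextral symmetric hypothesis forces $b(ac) = 0$ as well. On the other hand, using $e^* = e^*s(e) = e^*v$ (relation $(2)$) I would rewrite $ac = e^*v = e^*$, so that $b(ac) = f(e^*v) = fe^*$. The whole argument then reduces to showing that $fe^* \neq 0$, which contradicts the conclusion just forced by symmetry.

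The only genuinely delicate point, and the one I expect to be the main obstacle, is verifying that $fe^*$ is an honestly nonzero element of $L_R(E)$ rather than merely a formal monomial. I would establish this by producing an explicit left multiplier that sends $fe^*$ to a known nonzero generator: multiplying on the left by $f^*$ and using associativity (which holds for Leavitt path algebras over a commutative unital ring) together with the CK-1 relation $f^*f = r(f) = v$ and the identity $ve^* = r(e)e^* = e^*$, one computes
\[ f^*(fe^*) = (f^*f)e^* = ve^* = e^*, \]
which is nonzero by Remark \ref{nonzero}. Since multiplication cannot turn the zero element into a nonzero one, this certifies $fe^* \neq 0$. Hence $b(ac) = fe^* \neq 0$ while $a(bc) = 0$, contradicting dextral symmetricity. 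Therefore no vertex of the associated graph can emit more than one loop.
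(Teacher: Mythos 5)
Your proof is correct and takes essentially the same route as the paper, which tests the dextral symmetric condition on the triple $(g^*, f, v)$ to get $g^*(fv)=g^*f=0$ by CK-1 while $f(g^*v)=fg^*\neq 0$. Your extra step certifying $fe^*\neq 0$ by left-multiplying by $f^*$ to recover $e^*$ is a detail the paper leaves implicit (it simply cites Remark \ref{nonzero}), and it is a worthwhile addition.
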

	\begin{proof}
		Let $L_R(E)$ be a dextral symmetric Leavitt path algebra such that the associated graph $E$ contains a vertex $v$ which emits at least two distinct loops say $g$, $f$. Then by CK-1 and Remark \ref{nonzero}, $g^*fv=g^*f=0$, but $fg^*v =fg^*\neq 0.$ Which is a contradiction.
	\end{proof}
	
	\begin{lemma}\label{leavlem2}
		The Leavitt path algebra $L_R(E)$ of a graph $E$ with a single vertex and a single loop is dextral symmetric.
	\end{lemma}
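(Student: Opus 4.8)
The plan is to identify $L_R(E)$ explicitly and then appeal to Proposition \ref{Comass}. Write $v$ for the single vertex and $e$ for the single loop, so that $s(e)=r(e)=v$. First I would observe that $v$ is the multiplicative unit of $L_R(E)$: since $E^0=\{v\}$, relations (1) and (2) give $vx=x=xv$ for each generator $x\in\{e,e^*\}$, and the idempotency $v\cdot v=v$ then forces $v$ to act as a two-sided identity on all of $L_R(E)$.

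Next I would combine the Cuntz--Krieger relations. The CK-1 relation gives $e^*e=r(e)=v$, and since $v$ is a regular vertex emitting exactly the single loop $e$, the CK-2 relation gives $v=ee^*$. Hence $ee^*=e^*e=v$, so $e$ is invertible with inverse $e^*$. As $L_R(E)$ is generated as an $R$-algebra by $\{v,e,e^*\}$ and $v$ is the unit, the algebra is in fact generated by the single invertible element $e$; therefore $L_R(E)\cong R[x,x^{-1}]$, the Laurent polynomial algebra over $R$, via $e\mapsto x$ and $e^*\mapsto x^{-1}$.

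Finally, because $R$ is a commutative ring, the Laurent polynomial algebra $R[x,x^{-1}]$ is commutative, and, as noted in the introduction, a Leavitt path algebra over a commutative ring is associative. Thus $L_R(E)$ is both commutative and associative, and its dextral symmetricity follows at once from part (i) of Proposition \ref{Comass}.

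I do not expect a genuine obstacle here. The only point requiring care is verifying that $ee^*=e^*e=v$, so that $e$ and $e^*$ are mutually inverse and the algebra collapses to the commutative Laurent ring; once commutativity is established, Proposition \ref{Comass} supplies the conclusion immediately. A more computational alternative would be to invoke the normal form of Remark \ref{nonzero}(2), reducing each element to an $R$-linear combination of powers $e^m$ and $(e^*)^n$ and checking the symmetric condition $a(bc)=0\Rightarrow b(ac)=0$ directly; but the identification with $R[x,x^{-1}]$ is cleaner and avoids case analysis.
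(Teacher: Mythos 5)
Your proof is correct and follows essentially the same route as the paper: identify $L_R(E)$ with the commutative Laurent polynomial algebra $R[x,x^{-1}]$ and invoke part (i) of Proposition \ref{Comass}. The only difference is that the paper simply cites \cite[Proposition~1.3.4]{Abrams} for the isomorphism, whereas you verify it directly from the Cuntz--Krieger relations ($ee^*=e^*e=v$ with $v$ the unit), which is a fine substitute.
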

	\begin{proof}
		The proof is straightforward. The Leavitt path algebra of such a graph is isomorphic to the Laurent polynomial algebra $R[x,x^{-1}]$ for an indeterminate $x$, as shown in \cite[Proposition~1.3.4]{Abrams}. Since $R$ is commutative, $R[x,x^{-1}]$ is commutative. Therefore, by part (i) of Proposition \ref{Comass}, $R[x,x^{-1}]$ is dextral symmetric.
	\end{proof}
	
	\begin{remark}\label{leavrem}
		With a similar argument as in Lemma \ref{leavlem2}, one can observe that a Leavitt path algebra $L_R(E)$ of a graph $E$ with a single vertex is always dextral symmetric.
	\end{remark}
	
	As a consequence of the above results, we have the following Lemma.
	
	\begin{lemma}
		A Leavitt path algebra $L_R(E)$ is dextral symmetric iff each component of the graph $E$ is either of the following two types.
		\begin{center}
			\begin{tikzpicture}
				\node[draw, circle, fill=black, inner sep=1pt, minimum size=5pt,          label=below: $u$] (u) at (0,0) {};
				\node at (0,-1) {\rm (i)};
				
				\node[draw, circle, fill=black, inner sep=1pt, minimum size=5pt, label=below: $v$] (v) at (3,0) {};
				\draw[thick, postaction={decorate,decoration={markings,mark=at position 0.5 with {\arrow{>}}}}] (v) to[out=135,in=45,looseness=65] (v);
				\node at (3,-1) {\rm (ii)};
				\node at (3,2.2) {e};
			\end{tikzpicture}
		\end{center}
	\end{lemma}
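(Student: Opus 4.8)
The plan is to prove both implications of the biconditional, leaning entirely on the structural lemmas already established in this section. For the forward direction, suppose $L_R(E)$ is dextral symmetric. By Lemma \ref{sourcerange} the graph $E$ has no edge with distinct source and range, so every edge of $E$ is a loop. Since a loop is incident to a single vertex, no edge can join two distinct vertices; consequently each connected component of $E$ consists of one vertex together with whatever loops it emits. By Lemma \ref{leavlem1} such a vertex emits at most one loop. Hence a component either has its single vertex emitting no loop, giving type (i), or emitting exactly one loop, giving type (ii). This is the bulk of the forward direction, and it follows almost formally once the two structural lemmas are in hand.

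For the converse, suppose every connected component of $E$ is of type (i) or (ii). I would first record that if $E = \bigsqcup_{\lambda} E_{\lambda}$ is the decomposition into connected components, then $L_R(E) \cong \bigoplus_{\lambda} L_R(E_{\lambda})$, with the summands mutually orthogonal: the vertex idempotents attached to distinct components annihilate one another, and there are no edges (nor ghost edges) crossing between components, so no product mixes two summands. Each $E_{\lambda}$ is a single-vertex graph (with or without a loop), so by Remark \ref{leavrem} every $L_R(E_{\lambda})$ is dextral symmetric.

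It then remains to pass from the summands to the whole algebra. Here I would use that any three elements $a,b,c \in L_R(E)$ have finite support, i.e.\ they all lie in a finite subsum $\bigoplus_{\lambda \in F} L_R(E_{\lambda})$ for some finite $F$. By Remark \ref{basic properties of dextral symmetric algebras}, a finite direct sum of dextral symmetric algebras is again dextral symmetric, so this finite subsum is dextral symmetric; verifying the implication $a(bc)=0 \Rightarrow b(ac)=0$ for these particular elements therefore reduces to a statement inside that finite subsum, where it already holds. Since $a,b,c$ were arbitrary, $L_R(E)$ is dextral symmetric.

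The step I expect to demand the most care is the converse, specifically the reduction from a possibly infinite family of components to the finite-direct-sum hypothesis of Remark \ref{basic properties of dextral symmetric algebras}. One must justify both the orthogonal direct-sum decomposition of $L_R(E)$ over the disjoint components and the finite-support observation that lets the dextral symmetric condition---being a condition on triples of elements---be checked one finite subsum at a time. The forward direction, by contrast, is essentially immediate given Lemmas \ref{sourcerange} and \ref{leavlem1}.
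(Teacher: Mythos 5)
Your proof is correct and follows essentially the same route as the paper, whose entire proof is a one-line citation of Lemmas \ref{sourcerange}, \ref{leavlem1}, \ref{leavlem2} and Remark \ref{leavrem}. Your write-up simply makes explicit the two details the paper leaves implicit, namely the orthogonal decomposition of $L_R(E)$ over connected components and the finite-support reduction that lets the triple-wise dextral symmetric condition be checked in a finite subsum.
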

	\begin{proof}
		The proof follows directly from Lemma \ref{sourcerange}, Lemma \ref{leavlem1}, Lemma \ref{leavlem2} and Remark \ref{leavrem}.
	\end{proof}
	
	This finally leads us to give the complete classification of a dextral symmetric Leavitt path algebra, $L_R(E)$.
	\begin{theorem}
		A dextral symmetric Leavitt path algebra $L_R(E)$ is isomorphic to :
		$$\bigoplus \limits_{I} R \oplus \bigoplus \limits_J R[x,x^{-1}].$$
		for some index sets $I$ and $J$.
	\end{theorem}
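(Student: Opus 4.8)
The plan is to assemble the isomorphism from the structural classification of the graph $E$ just obtained, together with the known fact that a Leavitt path algebra decomposes according to the connected components of its graph. First I would invoke the preceding Lemma to conclude that every connected component of $E$ is either a single isolated vertex (type (i)) or a single vertex carrying exactly one loop (type (ii)); no other component can occur in a dextral symmetric $L_R(E)$. This reduces the problem to identifying the Leavitt path algebra contributed by each type of component.

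Next I would use the standard fact that for a graph $E$ which is the disjoint union of subgraphs $\{E_t\}_{t \in T}$, the Leavitt path algebra splits as a direct sum $L_R(E) \cong \bigoplus_{t \in T} L_R(E_t)$; the orthogonality of the vertex idempotents across distinct components and the fact that edges and ghost edges only connect vertices within a single component make this a routine verification (or it can simply be cited). Applying this to the two component types, I would compute each summand: a type (i) component is a single vertex with no edges, whose Leavitt path algebra is generated by one idempotent $v$ subject only to $v^2 = v$, hence isomorphic to $R$; a type (ii) component is a single vertex with a single loop, whose Leavitt path algebra is isomorphic to the Laurent polynomial ring $R[x, x^{-1}]$, exactly as recorded in the proof of Lemma \ref{leavlem2} via \cite[Proposition~1.3.4]{Abrams}.

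Letting $I$ index the type (i) components and $J$ index the type (ii) components, collecting the summands then yields
\[
L_R(E) \;\cong\; \bigoplus_{I} R \;\oplus\; \bigoplus_{J} R[x, x^{-1}],
\]
which is the claimed form.

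The main obstacle, and the only point requiring genuine care rather than citation, is justifying the component-wise decomposition $L_R(E) \cong \bigoplus_t L_R(E_t)$ and ensuring it respects the arbitrary (possibly infinite) index sets $I$ and $J$. One must check that the direct sum is the correct construction here: since every element of $L_R(E)$ is a finite $R$-linear combination of monomials $\alpha \gamma^*$ with $\alpha, \gamma \in E^*$ (Remark \ref{nonzero}), and each such path lies entirely in one component, every element has support in finitely many components, so the algebraic direct sum (not the product) is indeed the right target. I would also note in passing that the summands are themselves dextral symmetric by Lemma \ref{leavlem2} and Remark \ref{leavrem}, confirming consistency, though this is not strictly needed for the isomorphism statement.
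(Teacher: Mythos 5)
Your proposal is correct and follows exactly the route the paper intends: the paper states this theorem without proof, treating it as an immediate consequence of the preceding lemma, and you supply precisely the missing routine details (the component-wise decomposition $L_R(E)\cong\bigoplus_t L_R(E_t)$, the identification of each summand as $R$ or $R[x,x^{-1}]$, and the observation that finite support of elements makes the algebraic direct sum the right construction). No gaps.
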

	
	\section{Leibniz algebra} \label{sec3}
	We begin this section by recalling the definition of a right Leibniz algebra. A {\em right Leibniz algebra} $\mathscr L$ over a field $F$ is a vector space over the field $F$ equipped with a bilinear product $[ , ]: \mathscr L \times \mathscr L \rightarrow \mathscr L$ called a bracket of $\mathscr L$ satisfying the following (right) Leibniz identity:
	$$[a,[b,c]]=[[a,b],c] - [[a,c],b] ~ \text{for each $a,b,c \in \mathscr L$, \cite{ayupov}}.$$
	The analogous notion of {\em left Leibniz algebra} $\mathscr L$ satisfies the following (left) Leibniz identity:
	$$[a,[b,c]] = [[a,b],c]+[b,[a,c]] ~ \text{for each $a,b,c \in \mathscr L$, \cite{nil}}.$$ 
	We start off our study of dextral symmetric right Leibniz algebras with the next proposition. We show that for every triplet of elements $x,y,z$ in a dextral symmetric right Leibniz algebra, the product of the form $[x, [y,z]]$ is equal to the product (of the same form) obtained by any permutation of $x,y$ and $z$ up to sign. Specifically, if the permutation is taken clockwise the sign is positive otherwise the sign is negative.
	
	\begin{proposition}\label{sym}
		If a right Leibniz algebra $\mathscr L$ is dextral symmetric, then  for all $x,y,z \in \mathscr L$, the following relations hold:
		$$[x,[y,z]]=[y,[z,x]]=[z,[x,y]]=-[x,[z,y]]=-[y,[x,z]]=-[z,[y,x]],$$
	\end{proposition}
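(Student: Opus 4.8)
The plan is to split the six asserted equalities into two groups: the three ``$=$'' signs that follow from the right Leibniz identity alone, and the genuinely cyclic equalities, which is where the dextral symmetric hypothesis must enter. First I would record what the defining identity gives with no extra hypothesis. Writing the right Leibniz identity for $[x,[y,z]]$, writing it again with $y$ and $z$ interchanged, and adding the two, the right-hand sides cancel and yield
\[
[a,[b,c]] = -[a,[c,b]] \qquad \text{for all } a,b,c \in \mathscr L,
\]
i.e. transposing the inner pair reverses the sign. Applied to the relevant triples this already supplies $[x,[y,z]] = -[x,[z,y]]$, $[y,[z,x]] = -[y,[x,z]]$ and $[z,[x,y]] = -[z,[y,x]]$, so three of the five equalities in the chain are free. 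What remains is the cyclic part $[x,[y,z]] = [y,[z,x]] = [z,[x,y]]$.

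This is where I would use dextral symmetry, and the crucial point is that the hypothesis only furnishes implications between vanishing products, never equalities, so it must be fed an expression that is already zero. Setting $c=b$ in the Leibniz identity gives $[a,[b,b]]=0$ for all $a,b$. Now the dextral symmetric condition $a(bc)=0 \Rightarrow b(ac)=0$, read with inner product $[b,b]$, interchanges the outer argument $a$ with the inner-left argument $b$ and hence produces the new \emph{universal} identity
\[
[b,[a,b]] = 0 \qquad \text{for all } a,b \in \mathscr L.
\]
Polarizing this by replacing $b$ with $b+c$ and discarding the two diagonal terms (which vanish by the same identity) yields the bilinear relation $[a,[b,c]] + [c,[b,a]] = 0$; that is, interchanging the outermost and innermost arguments, with the inner-left entry fixed, also reverses the sign.

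Finally I would combine the two sign-reversing transpositions: the inner-pair swap (positions $2,3$) from the Leibniz identity and the outer/inner-right swap (positions $1,3$) from the identity just derived. Any two distinct transpositions generate the symmetric group on the three argument slots, and each carries the factor $-1$, so every permutation $\sigma$ of $(x,y,z)$ scales $[x,[y,z]]$ by $\operatorname{sgn}(\sigma)$. Reading off the six signed values then delivers the entire chain at once: the even (cyclic) permutations reproduce $[y,[z,x]]$ and $[z,[x,y]]$ with a $+$ sign, while the odd permutations account for $-[x,[z,y]]$, $-[y,[x,z]]$ and $-[z,[y,x]]$. The main obstacle, and the only step requiring any ingenuity, is the passage from a vanishing-only hypothesis to a genuine identity: one must notice that $[a,[b,b]]$ is \emph{forced} to be zero, apply dextral symmetry precisely there, and then linearize; everything afterward is sign bookkeeping and needs no restriction on the characteristic of $F$.
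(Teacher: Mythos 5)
Your proof is correct and follows essentially the same route as the paper: both arguments extract the inner antisymmetry $[a,[b,c]]=-[a,[c,b]]$ from the right Leibniz identity, use dextral symmetry on the forced vanishing of $[a,[b,b]]$ (the paper polarizes before applying the hypothesis, you polarize after -- the resulting identity $[x,[y,z]]=-[z,[y,x]]$ is the same), and then assemble the six-term chain by permuting. Your explicit treatment of the diagonal terms $[b,[a,b]]=0$ and the $S_3$-sign bookkeeping is a slightly more careful write-up of exactly the paper's argument, not a different one.
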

	\begin{proof}
		Since $\mathscr L$ is a right Leibniz algebra, we always have $[y,[x+z,x+z]]=0$ for all $x,y,z \in \mathscr L$. Moreover, $\mathscr L$ is dextral symmetric, which implies that $[x+z,[y,x+z]]=0$,  leading to $[x,[y,z]]=-[z,[y,x]]$.
		It follows from the Leibniz identity that $[x,[y,z]]=-[x,[z,y]]$. By replacing $x$ with $y$, $y$ with $z$, and $z$ with $x$, the same argument can be applied to obtain the required result.
	\end{proof}
	
	\begin{theorem}\label{sym2}
		A right Leibniz algebra $\mathscr L$ which is also a left Leibniz algebra is dextral symmetric iff it is anti-associative.
	\end{theorem}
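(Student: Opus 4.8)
The plan is to route both implications through the single symmetric identity
\[
[b,[a,c]]=-[a,[b,c]]\qquad\text{for all } a,b,c\in\mathscr L,
\]
showing that, once both Leibniz identities are available, this identity is equivalent to dextral symmetry on the one hand and to anti-associativity on the other. Before splitting into the two directions I would record two auxiliary facts. First, subtracting the right Leibniz identity from the left Leibniz identity cancels the common terms $[[a,b],c]$ and $[a,[b,c]]$, leaving the \emph{bilateral relation}
\[
[[a,c],b]=-[b,[a,c]]\qquad\text{for all } a,b,c\in\mathscr L,
\]
valid in any algebra that is simultaneously left and right Leibniz. Second, I would reuse the purely right-Leibniz fact already exploited in the proof of Proposition \ref{sym}, namely that $[a,[x,x]]=0$ forces the antisymmetry $[a,[b,c]]=-[a,[c,b]]$.

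For the implication that anti-associativity yields dextral symmetry, I would begin from anti-associativity written as $[[a,c],b]=-[a,[c,b]]$, rewrite its right-hand side as $[a,[b,c]]$ via the right-Leibniz antisymmetry, and compare with the bilateral relation $[[a,c],b]=-[b,[a,c]]$. Equating the two expressions for $[[a,c],b]$ produces exactly $[b,[a,c]]=-[a,[b,c]]$; in particular $[a,[b,c]]=0$ forces $[b,[a,c]]=0$, which is the definition of dextral symmetry.

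For the converse I would assume $\mathscr L$ dextral symmetric and invoke Proposition \ref{sym}, whose cyclic part gives $[c,[a,b]]=[a,[b,c]]$. Relabelling the bilateral relation (swapping the roles of $b$ and $c$) isolates $[[a,b],c]=-[c,[a,b]]$, and substituting the cyclic identity turns this into $[[a,b],c]=-[a,[b,c]]$, which is precisely anti-associativity. This completes the equivalence.

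I expect the principal difficulty to be bookkeeping rather than conceptual: one must apply the bilateral relation and the cyclic relations of Proposition \ref{sym} with the correct assignment of variables to slots, since a single misplaced argument flips a sign and collapses the argument. A secondary point I would flag in a remark is that combining anti-associativity with the right Leibniz identity and the cyclic relations additionally forces $3[a,[b,c]]=0$, so that outside characteristic $3$ every such algebra satisfies $[a,[b,c]]=0$ identically; this does not affect the statement but clarifies how restrictive the anti-associativity condition is in this setting.
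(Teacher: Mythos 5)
Your proposal is correct and follows essentially the same route as the paper: both directions rest on the relation $[[a,c],b]=-[b,[a,c]]$ obtained by subtracting the two Leibniz identities, on the antisymmetry $[a,[b,c]]=-[a,[c,b]]$ coming from the right Leibniz identity, and on the cyclic relations of Proposition~\ref{sym} for the forward implication. Your closing observation that anti-associativity forces $3[a,[b,c]]=0$ is a valid and worthwhile addition, consistent with the paper's final example living over a field of characteristic $3$.
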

	\begin{proof} 
		A right Leibniz algebra $\mathscr L$ which is also a left Leibniz algebra satisfies
		$$[x,[y,z]]=-[x,[z,y]]=-[[y,z],x]=[[z,y],x],$$
		for all $x,y,z \in \mathscr L.$ Assume $\mathscr L$ to be dextral symmetric. Then by Proposition \ref{sym}, for any $x,y,z \in \mathscr L$ we have the following relations,
		$$[x,[y,z]]=[z,[x,y]]=-[[x,y],z].$$
		Conversely, let $\mathscr L$ be anti-associative. If we have elements $x,y,z\in \mathscr L$ such that $[x,[y,z]]=0$, then we get,
		$$[y,[x,z]]=-[[x,z],y]= [x,[z,y]]=0.$$
	\end{proof}
	
	It is worth mentioning here that not every dextral symmetric right Leibniz algebra is anti-associative. For instance, Example \ref{lnotr} is dextral symmetric but not anti-associative.
	
	Henceforth, unless otherwise mentioned, the term Leibniz algebra will mean a right Leibniz algebra.
	
	Over the years, many studies have been carried out on solvable and right nilpotent Leibniz algebras, \cite{ayupov, omirov, nil, Abdu, rakh, ismail}. In this paper, we introduce another class of Leibniz algebras that lies between the classes of right nilpotent and solvable Leibniz algebras. We introduce the notion of left nilpotency, which is a weaker notion than the usual notion of nilpotency. However, these two notions of nilpotency coincide for a Lie algebra. In order to give the formal definition of this new class of Leibniz algebras, let us mention the following four sequences of two-sided ideals of a given Leibniz algebra $\mathscr L$.
	\begin{enumerate}[(i)]
		\item $\mathscr L^{<1>} = \mathscr L, ~ \mathscr L^{<n+1>}=[\mathscr L^{<n>}, \mathscr L];$
		\medskip
		\item $\mathscr L^{(1)} = \mathscr L, ~ \mathscr L^{(n+1)} = [\mathscr L, \mathscr L^{(n)}]$;
		\medskip
		\item $\mathscr L^{[1]}= \mathscr L, ~ \mathscr L^{[n+1]} = [\mathscr L^{[n]}, \mathscr L^{[n]}];$
		\medskip
		\item $\mathscr L ^1= \mathscr L, ~ \mathscr L^{n+1}= [\mathscr L^1, \mathscr L^n] + [\mathscr L^2, \mathscr L^{n-1}]+ \cdots +[\mathscr L^{n-1}, \mathscr L^2] +[\mathscr L^n, \mathscr L^1].$
	\end{enumerate}
	The algebra $\mathscr L$ is said to be nilpotent if $\mathscr L^n=0$, for some $n \in \mathbb N$, right nilpotent if $\mathscr L^{<k>}=0$, for some $k \in \mathbb N$, and solvable if  there exists $n^{\prime} \in \mathbb N$ such that $\mathscr L^ {[n^\prime]}=0$ \cite{ayupov}.
	
	We call a Leibniz algebra $\mathscr L$ left nilpotent if there exists $k^\prime \in \mathbb N$ such that $\mathscr L^{(k^\prime)}=0$. A similar notion can be defined for left Leibniz algebras. It easily follows that a subalgebra as well as a quotient algebra of a left nilpotent Leibniz algebra is left nilpotent.
	
	It was shown in \cite{ayupov} that a Leibniz algebra is right nilpotent if and only if it is nilpotent. Also, it is well known that a right nilpotent Leibniz algebra is solvable, but not vice versa.
	
	It is easy to see from the Leibniz identity that in a Leibniz algebra, right nilpotency will imply left nilpotency. But Example \ref{lnotr} below shows that the reverse implication may not hold in general. Thus implying that in a Leibniz algebra, right nilpotency and left nilpotency are two different concepts. This distinction comes mainly from the fact that the brackets in a generic Leibniz algebra do not commute.
	
	\begin{example}\label{lnotr}
		Let $\mathscr L$ be the Leibniz algebra of dimension $3$, spanned by $\{x,y,z\}$ with the non-zero product given by
		$$[z,x]=z.$$
		Then $\mathscr L$ is left nilpotent but not right nilpotent.
	\end{example}
	
	\begin{theorem}
		Let $\mathscr L$ be a finite dimensional Leibniz algebra over a field of characteristic zero. If $\mathscr L$ is left nilpotent, then it is solvable.
	\end{theorem}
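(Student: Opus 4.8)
The plan is to avoid any structural machinery and instead compare the derived series with the left lower central series termwise. Concretely, I would prove by induction on $n$ that
\[
\mathscr{L}^{[n]} \subseteq \mathscr{L}^{(n)} \qquad \text{for all } n \in \mathbb{N}.
\]
Once this nested inclusion is in hand the theorem follows at once: if $\mathscr{L}$ is left nilpotent, so that $\mathscr{L}^{(k)} = 0$ for some $k \in \mathbb{N}$, then $\mathscr{L}^{[k]} \subseteq \mathscr{L}^{(k)} = 0$, which is exactly the solvability of $\mathscr{L}$. Thus the entire statement reduces to verifying this one family of inclusions.

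The base case is trivial, since $\mathscr{L}^{[1]} = \mathscr{L} = \mathscr{L}^{(1)}$. For the inductive step I would assume $\mathscr{L}^{[n]} \subseteq \mathscr{L}^{(n)}$ and use only the bilinearity of the bracket, through the elementary monotonicity rule $[A,B] \subseteq [C,D]$ whenever $A \subseteq C$ and $B \subseteq D$. Applying it with the induction hypothesis in both slots gives $\mathscr{L}^{[n+1]} = [\mathscr{L}^{[n]}, \mathscr{L}^{[n]}] \subseteq [\mathscr{L}^{(n)}, \mathscr{L}^{(n)}]$, and then enlarging only the first factor from $\mathscr{L}^{(n)}$ to $\mathscr{L}$ (legitimate because $\mathscr{L}^{(n)}$ is a subspace of $\mathscr{L}$) yields
\[
[\mathscr{L}^{(n)}, \mathscr{L}^{(n)}] \subseteq [\mathscr{L}, \mathscr{L}^{(n)}] = \mathscr{L}^{(n+1)}.
\]
Chaining the two inclusions closes the induction.

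The only point that needs care is to enlarge the left-hand factor and keep the right-hand one equal to $\mathscr{L}^{(n)}$, so that the result lands exactly in $\mathscr{L}^{(n+1)} = [\mathscr{L}, \mathscr{L}^{(n)}]$; this is precisely why the comparison must be made against the \emph{left} lower central series. Note that one cannot route the argument through the right lower central series $\mathscr{L}^{<n>}$, since a left nilpotent algebra need not be right nilpotent, as Example~\ref{lnotr} shows. I would also point out that the argument uses nothing beyond the definitions of the two ideal series and bilinearity of the bracket: it invokes neither the Leibniz identity, nor characteristic zero, nor finite-dimensionality, so I would expect those last two hypotheses to be present only for uniformity with the converse implication. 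Consequently I anticipate no genuine obstacle here, the whole proof being the two-line induction above.
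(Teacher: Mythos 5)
Your proof is correct, but it is genuinely different from---and considerably more elementary than---the one in the paper. The paper's argument is structural: it invokes the Levi--Malcev decomposition for Leibniz algebras (Barnes's theorem), writing $\mathscr L = R + S$ with $R$ the solvable radical and $S$ a semisimple Lie subalgebra, and then kills $S$ by observing that a left nilpotent semisimple Lie algebra must vanish. That route genuinely uses both finite-dimensionality and characteristic zero, since Levi's theorem requires them. Your termwise comparison $\mathscr L^{[n]} \subseteq \mathscr L^{(n)}$, proved by the two-step monotonicity argument $[\mathscr L^{[n]},\mathscr L^{[n]}] \subseteq [\mathscr L^{(n)},\mathscr L^{(n)}] \subseteq [\mathscr L,\mathscr L^{(n)}]$, uses only bilinearity and so establishes the stronger statement that \emph{any} left nilpotent Leibniz algebra, over any field and of any dimension, is solvable; you are right that the extra hypotheses in the statement are artifacts of the paper's method rather than of the result. (Your parenthetical remark that one ``cannot'' route the argument through the right lower central series is slightly overstated---the symmetric argument gives $\mathscr L^{[n]} \subseteq \mathscr L^{<n>}$ just as easily---but your real point, that this would prove the wrong implication since left nilpotency does not control $\mathscr L^{<n>}$, is sound.) It is also worth noting that the paper's later Proposition \ref{solnil3} proves the much sharper identity $\mathscr L^{[m]} = \mathscr L^{(2^{m-1})}$ under the additional dextral symmetric hypothesis; your inclusion is the coarse, hypothesis-free version of that comparison, and it is all this theorem needs.
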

	\begin{proof}
		Assume that $\mathscr L$ is left nilpotent. By the Levi-Malcev theorem for Leibniz algebra \cite[Theorem 1]{barnes}, $\mathscr L$ can be decomposed as $\mathscr L = R+S$, where $R$ is the solvable radical of $\mathscr L$, and $S$ is a subalgebra of $\mathscr L$ which is a semi-simple Lie algebra. We now claim that $S$ must be zero. If not, since $S$ is a subalgebra of $\mathscr L$, it is also left nilpotent and hence a nilpotent Lie algebra. Consequently, $S$ is solvable. This contradicts the fact that there exists no non-zero Lie algebra that is both semi-simple and solvable. Therefore, $S=0$ yielding that $\mathscr L$ is solvable.
	\end{proof}
	However, the following example shows that the converse of the above theorem is not true. 
	
	\begin{example}
		Consider the 3-dimensional Leibniz algebra $\mathscr L^\prime$ generated by $\{x,y,z\}$ with the non-zero products given by:
		$$[x,z]=x; ~ [y,z]=y; ~ [z,y]=-y.$$
		Clearly, $\mathscr L^{\prime}$ is solvable as $\mathscr {L^{\prime}}^{[3]}=0$. But it is not left nilpotent. 
	\end{example}
	
	Thus in the class of finite dimensional Leibniz algebras over a field of characteristic zero, we have the following strict inclusions:
	\[ 
	\begin{Bmatrix}
		\text{ Right nilpotent}
	\end{Bmatrix} \subsetneq \begin{Bmatrix}
		\text{ Left nilpotent}
	\end{Bmatrix} \subsetneq \begin{Bmatrix}
		\text{Solvable } 
	\end{Bmatrix}.
	\]
	In what follows, we establish that in the class of dextral symmetric Leibniz algebras, the notions of solvability and left nilpotency coincide.
	\begin{lemma}\label{L4}
		In a dextral symmetric Leibniz algebra $\mathscr L$, for any elements $x,y,z,w \in \mathscr L,$ the following relation holds:
		$$[[x,y],[z,w]]=[x,[y,[z,w]]].$$
	\end{lemma}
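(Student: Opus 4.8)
The plan is to reduce the asserted identity to a single bracket relation by the right Leibniz identity, and then to lean on the antisymmetry already isolated in Proposition \ref{sym}. First I would apply the right Leibniz identity with $a=x$, $b=y$, $c=[z,w]$, which gives
\[
[x,[y,[z,w]]]=[[x,y],[z,w]]-[[x,[z,w]],y].
\]
Hence the claimed equality $[[x,y],[z,w]]=[x,[y,[z,w]]]$ is equivalent to the single vanishing statement $[[x,[z,w]],y]=0$. Equivalently, feeding the cyclic instance of Proposition \ref{sym} applied to the triple $x,y,[z,w]$, namely $[x,[y,[z,w]]]=[[z,w],[x,y]]$, back into the display, the claim becomes the ``block swap'' $[[x,y],[z,w]]=[[z,w],[x,y]]$. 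So it suffices to prove either of these two equivalent normal forms.

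For the vanishing I would first upgrade Proposition \ref{sym} to the statement that the trilinear map $(a,b,c)\mapsto[a,[b,c]]$ is totally antisymmetric for \emph{all} elements, including compound ones: the proof of Proposition \ref{sym} uses only $[p,[q,p]]=0$, and this holds verbatim when $p$ and $q$ are replaced by arbitrary elements such as $[z,w]$. Granting this, I would expand the inner bracket $[z,w]$ by the right Leibniz identity inside both $[[x,y],[z,w]]$ and $[x,[y,[z,w]]]$, and use the total antisymmetry to move each resulting commutator into a common slot, so that both sides are rewritten as the same alternating sum of ``double commutators'' $[[\,\cdot\,,\,\cdot\,],[\,\cdot\,,\,\cdot\,]]$. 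Matching the two presentations should collapse the difference to precisely the discrepancy $[[x,[z,w]],y]$ isolated in the first paragraph.

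The hard part will be this last mile. The purely formal manipulations above are essentially circular: total antisymmetry together with the right Leibniz identity keeps reproducing the very identity to be proved, so they cannot close the argument on their own. The genuinely new input must be the \emph{defining implication} of dextral symmetry, applied to a triple that a preliminary step has shown to vanish, rather than merely its polarized (antisymmetry) consequence. Concretely, I would try to realize the block-swap discrepancy as $[b,[a,c]]$ for elements $a,b,c$ for which $[a,[b,c]]$ is visibly zero, exploiting that a symmetrized product $[u,v]+[v,u]$ lies in the Leibniz kernel and is annihilated whenever it occurs as the right-hand entry of a bracket, and then invoke dextral symmetry to transfer that vanishing. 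I expect controlling this interaction between the Leibniz kernel and the commutator $[z,w]$ to be the main obstacle, and it is exactly the step where dextral symmetry, as opposed to the bare anti-symmetry of triples, is indispensable.
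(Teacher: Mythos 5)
Your proposal does not close the argument, and by your own admission it leaves the decisive step unresolved; as written it is a plan with a gap, not a proof. The reductions in your first paragraph are correct (the claim is indeed equivalent to $[[x,[z,w]],y]=0$, and also to the block swap $[[x,y],[z,w]]=[[z,w],[x,y]]$), and your observation that Proposition \ref{sym} applies verbatim to compound elements such as $[z,w]$ and $[x,y]$ is exactly the key point. But you then assert that total antisymmetry of $(a,b,c)\mapsto[a,[b,c]]$ together with the Leibniz identity ``cannot close the argument on their own'' and that some further, unspecified use of the defining implication of dextral symmetry on the Leibniz kernel is indispensable. That diagnosis is wrong: the paper's proof is a four-step chain using nothing but Proposition \ref{sym} (cyclic invariance and sign change under transposition), applied once with a compound first entry, once with a compound third entry, and twice to an inner bracket followed by substitution of equals inside the outer bracket:
\[
[[x,y],[z,w]]=[z,[w,[x,y]]]=[z,[x,[y,w]]]=-[x,[z,[y,w]]]=[x,[y,[z,w]]].
\]
No expansion of $[z,w]$ by the Leibniz identity is needed, and no separate analysis of the Leibniz kernel enters.

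The concrete gap, then, is the ``last mile'' you flag yourself: you never exhibit the vanishing of $[[x,[z,w]],y]$ or the block swap, and the route you sketch for doing so (expanding the inner commutator and matching alternating sums) is the one you correctly suspect to be circular. The fix is not more input but a different ordering of the same input: rather than expanding brackets, treat $[x,y]$ and $[y,w]$ as single elements and repeatedly permute the three slots of the triple product. If you want to salvage your reformulation, note that the block swap $[[x,y],[z,w]]=[[z,w],[x,y]]$ follows immediately from the chain above together with the cyclic identity $[[z,w],[x,y]]=[x,[y,[z,w]]]$, so your equivalent normal form is provable by exactly the same means you dismissed as insufficient.
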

	\begin{proof}
		Let $x,y,z,w$ be any four elements of $\mathscr L$, then by repeatedly applying Proposition \ref{sym} it follows that
		$$[[x,y], [z,w]]=[z,[w,[x,y]]]=[z,[x,[y,w]]]$$
		$$=-[x,[z,[y,w]]]=[x,[y,[z,w]]].$$
	\end{proof}
	
	\begin{corollary}\label{L4cor}
		Let $\mathscr L$ be a dextral symmetric Leibniz algebra. For any two subspaces $A,B$ of $\mathscr L$, and for any integer $n\geq 2$,
		\[[[A,B],\mathscr L^{(n)}]=[A,[B,\mathscr L^{(n)}].\]
		Thus, in particular for any three integers $ m_1,m_2 \geq 1$, and  $n \geq 2$,
		$$[[\mathscr L^{[m_1]}, \mathscr L^{[m_2]}], \mathscr L^{[n]}]=[\mathscr L^{[m_1]},[ \mathscr L^{[m_2]}, \mathscr L^{(n)}]].$$
	\end{corollary}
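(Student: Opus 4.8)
The plan is to reduce the asserted identity of subspaces to the single-element relation
$$[[a,b],c]=[a,[b,c]] \quad\text{for all } a\in A,\ b\in B,\ c\in\mathscr L^{(n)},$$
and then to establish this relation from Lemma \ref{L4}. The crucial structural fact is that for $n\geq 2$ every element $c\in\mathscr L^{(n)}$ is, by the defining recursion $\mathscr L^{(n)}=[\mathscr L,\mathscr L^{(n-1)}]$, a finite sum $c=\sum_i[z_i,w_i]$ with $z_i\in\mathscr L$ and $w_i\in\mathscr L^{(n-1)}\subseteq\mathscr L$. In particular each summand is genuinely a bracket of two elements of $\mathscr L$, so Lemma \ref{L4} is applicable termwise; this is exactly why the hypothesis $n\geq 2$ is needed.

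Concretely, I would first fix $a\in A$, $b\in B$ and such a $c=\sum_i[z_i,w_i]$. Using bilinearity of the bracket together with Lemma \ref{L4} applied to each summand (with $x=a$, $y=b$, and the bracket $[z_i,w_i]$ playing the role of $[z,w]$), I obtain
$$[[a,b],c]=\sum_i[[a,b],[z_i,w_i]]=\sum_i[a,[b,[z_i,w_i]]]=[a,[b,c]].$$
This is the desired single-element relation, and reading it from right to left gives the same identity, so no separate argument is required for the two directions.

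Next I would pass to subspaces. Since $[A,B]$ is spanned by the brackets $[a,b]$ with $a\in A$, $b\in B$, the subspace $[[A,B],\mathscr L^{(n)}]$ is spanned by the elements $[[a,b],c]$ with $c\in\mathscr L^{(n)}$; likewise $[A,[B,\mathscr L^{(n)}]]$ is spanned by the elements $[a,[b,c]]$. The relation above shows these two spanning sets coincide element by element, which yields both inclusions simultaneously and hence the equality $[[A,B],\mathscr L^{(n)}]=[A,[B,\mathscr L^{(n)}]]$. The displayed special case then follows immediately by taking $A=\mathscr L^{[m_1]}$ and $B=\mathscr L^{[m_2]}$ and keeping $\mathscr L^{(n)}$ in the third factor.

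I do not anticipate a genuine obstacle: Lemma \ref{L4} carries the entire argument. The only point requiring care is the observation that, thanks to $n\geq 2$, members of $\mathscr L^{(n)}$ are honest sums of brackets of elements of $\mathscr L$, so that Lemma \ref{L4} can be invoked on each summand; the remaining work is the routine bookkeeping of passing from generators to their spans.
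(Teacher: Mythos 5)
Your argument is correct and is exactly the intended route: the paper gives no proof of this corollary, presenting it as an immediate consequence of Lemma \ref{L4}, and your write-up supplies precisely the missing details (decomposing elements of $\mathscr L^{(n)}$, $n\geq 2$, as sums of brackets so the lemma applies termwise, then passing from generators to spans). Note only that the paper's displayed formulas contain typos (an unclosed bracket and $\mathscr L^{[n]}$ versus $\mathscr L^{(n)}$ in the third slot); your reading, with $\mathscr L^{(n)}$ throughout, is the one consistent with how the corollary is used in Proposition \ref{solnil2}.
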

	
	\begin{remark}\label{solnil} 
		Lemma \ref{L4} shows that for a dextral symmetric Leibniz algebra $\mathscr L,  ~\mathscr L^{[3]}=\mathscr L^{(4)}$. We generalize this observation in the subsequent propositions.
	\end{remark}
	
	\begin{proposition}\label{solnil2}
		Let $\mathscr L$ be a dextral symmetric Leibniz algebra. Then for each $m,n \geq 1$, $$[\mathscr L^{[m]}, \mathscr L^{(n)}]=\mathscr L^{(2^{(m-1)}+n)}.$$
	\end{proposition}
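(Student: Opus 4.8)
The plan is to argue by induction on $m$, keeping $n$ as a free parameter, and to use Corollary \ref{L4cor} as the engine that converts a derived-series bracket into the left-normed series $\{\mathscr L^{(k)}\}$, after which the inductive hypothesis is applied twice. Concretely, I would prove the identity for all $m\geq 1$ and all $n\geq 2$ by induction on $m$; the admissibility bound $n\geq 2$ is exactly what Corollary \ref{L4cor} demands, and I will check that it is preserved along the induction.

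For the base case $m=1$ we have $\mathscr L^{[1]}=\mathscr L$, so $[\mathscr L^{[1]},\mathscr L^{(n)}]=[\mathscr L,\mathscr L^{(n)}]=\mathscr L^{(n+1)}=\mathscr L^{(2^{0}+n)}$ straight from the definition of the sequence $\{\mathscr L^{(k)}\}$, for every $n\geq 1$. For the inductive step, assuming the identity at level $m$, I would write $\mathscr L^{[m+1]}=[\mathscr L^{[m]},\mathscr L^{[m]}]$ and apply Corollary \ref{L4cor} with $A=B=\mathscr L^{[m]}$:
\[ [\mathscr L^{[m+1]},\mathscr L^{(n)}]=[[\mathscr L^{[m]},\mathscr L^{[m]}],\mathscr L^{(n)}]=[\mathscr L^{[m]},[\mathscr L^{[m]},\mathscr L^{(n)}]]. \]
The inner bracket equals $\mathscr L^{(2^{m-1}+n)}$ by the inductive hypothesis, and a second application of the hypothesis, now with $2^{m-1}+n$ playing the role of $n$, gives
\[ [\mathscr L^{[m]},\mathscr L^{(2^{m-1}+n)}]=\mathscr L^{(2^{m-1}+(2^{m-1}+n))}=\mathscr L^{(2^{m}+n)}=\mathscr L^{(2^{(m+1)-1}+n)}. \]
Since $2^{m-1}+n\geq 2$ whenever $n\geq 1$, the second invocation of the hypothesis stays inside the admissible range, so the induction closes.

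The hard part will be the boundary value $n=1$, and I expect it to be the genuine obstacle rather than a formality. Corollary \ref{L4cor} is stated only for $n\geq 2$, and for good reason: when $n=1$ the third slot is $\mathscr L$ itself rather than a space of brackets, so the unfolding step $[[A,B],\mathscr L^{(n)}]=[A,[B,\mathscr L^{(n)}]]$ is no longer available, and one is left with the Leibniz identity alone. For $m=2$ the Leibniz identity gives only $\mathscr L^{(3)}\subseteq[\mathscr L^{[2]},\mathscr L]$ and leaves the reverse inclusion untouched; in fact the reverse inclusion can fail, since the dextral symmetric algebra of Example \ref{lnotr} satisfies $[\mathscr L^{[2]},\mathscr L]\neq 0$ while $\mathscr L^{(3)}=0$. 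I would therefore either restrict the statement to $n\geq 2$ (which is what is actually used downstream, e.g.\ for the relation $\mathscr L^{[3]}=\mathscr L^{(4)}$ of Remark \ref{solnil}, an instance with $n=2$) or isolate and exclude configurations of the Example \ref{lnotr} type before attempting the $n=1$ case.
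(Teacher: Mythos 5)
Your induction is exactly the paper's proof: the base case $m=1$ from the definition of $\mathscr L^{(n+1)}$, then the inductive step via Corollary \ref{L4cor} with $A=B=\mathscr L^{[m]}$ followed by two applications of the inductive hypothesis. Your reservation about $n=1$ is also well founded and is something the paper glosses over: its inductive step invokes Corollary \ref{L4cor} for all $n\geq 1$, even though that corollary is stated (and is only provable, since Lemma \ref{L4} needs the third slot to be spanned by brackets) for $n\geq 2$, and your counterexample is correct --- in Example \ref{lnotr} one has $[\mathscr L^{[2]},\mathscr L^{(1)}]=\langle z\rangle\neq 0=\mathscr L^{(3)}$, so the identity genuinely fails at $(m,n)=(2,1)$. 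Restricting the proposition to $n\geq 2$ (keeping $m=1$, all $n\geq 1$, as a separately verified case) is the right fix and costs nothing downstream, since Proposition \ref{solnil3} only ever applies this result with $n=2^{m-1}$, which equals $1$ only when $m=1$.
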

	\begin{proof}
		We proceed by induction on $m$. For $m=1$ and any $n \geq 1$, we have
		$$[\mathscr L^{[1]}, \mathscr L^{(n)}]= \mathscr L^{(n+1)}= \mathscr L^{(2^{(1-1)}+n)}.$$
		Thus it is true for $m=1$. Let us assume the result holds for $m$, i.e.,
		$$[L^{[m]}, \mathscr L^{(n)}]=\mathscr L^{(2^{(m-1)}+n)}.$$
		By Corollary \ref{L4cor} and the induction hypothesis we get:
		\begin{align*}
			[\mathscr L^{[m+1]}, \mathscr L^{(n)}]&=[\mathscr L^{[m]}, [\mathscr L^{[m]},\mathscr L^{(n)}]]
			\\&=[\mathscr L^{[m]}, \mathscr L^{(2^{(m-1)}+n)}]
			\\& = \mathscr L^{(2.2^{(m-1)}+n)}
			\\& = \mathscr L^{(2^{m}+n)}.
		\end{align*}
	\end{proof}
	
	\begin{proposition}\label{solnil3}
		Let $\mathscr L$ be a dextral symmetric Leibniz algebra. Then for any $m \geq 1,$ $$\mathscr L^{[m]}= \mathscr L^{(2^{(m-1)})}.$$
	\end{proposition}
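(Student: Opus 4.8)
The plan is to argue by induction on $m$, taking Proposition~\ref{solnil2} as the essential ingredient. For the base case $m=1$, both sides collapse to $\mathscr L$ directly from the definitions, since $\mathscr L^{[1]}=\mathscr L=\mathscr L^{(1)}=\mathscr L^{(2^{0})}=\mathscr L^{(2^{(1-1)})}$.

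For the inductive step, suppose the identity $\mathscr L^{[m]}=\mathscr L^{(2^{(m-1)})}$ holds. I would begin from the defining recursion of the derived series, $\mathscr L^{[m+1]}=[\mathscr L^{[m]},\mathscr L^{[m]}]$, and use the induction hypothesis to rewrite the second factor as $\mathscr L^{(2^{(m-1)})}$, obtaining $\mathscr L^{[m+1]}=[\mathscr L^{[m]},\mathscr L^{(2^{(m-1)})}]$. At this point Proposition~\ref{solnil2} applies verbatim with the choice $n=2^{(m-1)}$: it gives $[\mathscr L^{[m]},\mathscr L^{(2^{(m-1)})}]=\mathscr L^{(2^{(m-1)}+2^{(m-1)})}=\mathscr L^{(2^{m})}$, which is exactly $\mathscr L^{(2^{((m+1)-1)})}$ and closes the induction.

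The only points demanding any care are the exponent bookkeeping $2^{(m-1)}+2^{(m-1)}=2^{m}$ and the correct specialization $n=2^{(m-1)}$ in Proposition~\ref{solnil2}; past these, the argument is a purely formal induction and I anticipate no genuine obstacle. It is worth emphasizing that the dextral symmetric hypothesis is used only implicitly here, since it has already been absorbed into Proposition~\ref{solnil2} (which rests on Lemma~\ref{L4} and Proposition~\ref{sym}); no additional appeal to the symmetry condition is required for this step.
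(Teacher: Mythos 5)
Your proof is correct and follows essentially the same route as the paper: induction on $m$, with the inductive step rewriting $\mathscr L^{[m+1]}=[\mathscr L^{[m]},\mathscr L^{[m]}]$ via the induction hypothesis and then applying Proposition~\ref{solnil2} with $n=2^{(m-1)}$. Your remark that the dextral symmetric hypothesis enters only through Proposition~\ref{solnil2} is also consistent with the paper's treatment.
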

	\begin{proof}
		We prove this proposition by induction on $m$. First of all, it is easy to see that the result is true for $m=1$. Assuming that it is true for $m$, we have:
		$$\mathscr L^{[m]}= \mathscr L^{(2^{(m-1)})}.$$
		Now, from the above assumption we may write $$\mathscr L^{[m+1]}= [\mathscr L^{[m]}, L^{[m]}] = [L^{[m]},  \mathscr L^{(2^{(m-1)})}].$$
		Following Proposition \ref{solnil2}, we can conclude that
		$$\mathscr L^{[m+1]}= \mathscr L^{(2^{(m-1)}+2^{(m-1)})}= \mathscr L^{(2^{m})}.$$
	\end{proof}
	We are now in a position to characterize a solvable dextral symmetric algebra.
	\begin{theorem}\label{dextral symmetricnil}
		A dextral symmetric Leibniz algebra $\mathscr L$ is solvable if and only if it is left nilpotent.
	\end{theorem}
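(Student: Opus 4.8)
The plan is to obtain the theorem as an essentially formal consequence of Proposition \ref{solnil3}, which already expresses each term of the derived series as a single term of the left (lower central) series via $\mathscr L^{[m]} = \mathscr L^{(2^{(m-1)})}$. Once this identification is in hand, both implications reduce to index bookkeeping, so the substantive work has all been done in Lemma \ref{L4}, Corollary \ref{L4cor} and Propositions \ref{solnil2}--\ref{solnil3}, where dextral symmetry was used.

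First I would record the elementary monotonicity of the left series: since $\mathscr L^{(n+1)} = [\mathscr L, \mathscr L^{(n)}]$, whenever $\mathscr L^{(k)} = 0$ we immediately get $\mathscr L^{(k+1)} = [\mathscr L, 0] = 0$, and hence by induction $\mathscr L^{(n)} = 0$ for every $n \geq k$. In other words, as soon as one term of the sequence $\{\mathscr L^{(n)}\}$ vanishes, all later terms vanish. This is the only auxiliary fact I will need beyond Proposition \ref{solnil3}.

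Next I would treat the two directions. For the forward direction, suppose $\mathscr L$ is solvable, so $\mathscr L^{[n']} = 0$ for some $n' \in \mathbb N$. Proposition \ref{solnil3} then gives $\mathscr L^{(2^{(n'-1)})} = \mathscr L^{[n']} = 0$, so taking $k' = 2^{(n'-1)}$ shows $\mathscr L$ is left nilpotent. For the converse, suppose $\mathscr L$ is left nilpotent, so $\mathscr L^{(k')} = 0$ for some $k'$. Choosing any $m$ with $2^{(m-1)} \geq k'$ (for instance $m = k'$, since $2^{(k'-1)} \geq k'$ for all $k' \geq 1$), the monotonicity above yields $\mathscr L^{(2^{(m-1)})} = 0$, whence $\mathscr L^{[m]} = \mathscr L^{(2^{(m-1)})} = 0$ by Proposition \ref{solnil3}. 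Thus $\mathscr L$ is solvable.

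I do not expect any genuine obstacle here: the heavy lifting, and the sole place where dextral symmetry enters, is Proposition \ref{solnil3}, and at the level of this theorem everything is formal. The only point requiring the slightest care is the index comparison between the exponential growth $2^{(m-1)}$ in the derived series and the linear index $k'$ of the left series; this is handled by observing $2^{(m-1)} \geq m \geq 1$, so one can always push the derived-series index past any prescribed bound $k'$.
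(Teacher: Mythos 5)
Your proposal is correct and follows essentially the same route as the paper: both directions are read off from Proposition \ref{solnil3} together with the observation that once a term of the left series vanishes, so do all later terms. Your index bookkeeping in the converse direction is in fact slightly more careful than the paper's (which writes $\mathscr L^{[n-1]}=0$ where $\mathscr L^{[n+1]}=0$ is what Proposition \ref{solnil3} actually yields), but the argument is the same.
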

	\begin{proof}
		The fact that solvability will imply left nilpotency is clear from Proposition \ref{solnil3}. Also, if $\mathscr L^{(n)}=0$ for some $n \in \mathbb N$, we have $\mathscr L^{(2^n)}=0$. Again, by Proposition \ref{solnil3} $ \mathscr L^{[n-1]}=0.$
	\end{proof}
	\begin{remark}
		\begin{enumerate}[(1)]
			\item It is worth mentioning here that a dextral symmetric solvable and hence left nilpotent right Leibnitz algebra may not be right nilpotent. For instance, the Leibniz algebra in Example \ref{lnotr} is dextral symmetric as well as solvable, whereas it is not right nilpotent.
			\item Since in a right Leibniz algebra which is also a left Leibniz algebra, the concepts of left nilpotency and right nilpotency are the same, it follows from Theorem \ref{dextral symmetricnil} that in a dextral symmetric right Leibniz algebra which is also a left Leibniz algebra all three notions of right nilpotency, left nilpotency and solvability coincide.
		\end{enumerate}
	\end{remark}
	\begin{proposition}\label{leftnil2}
		Let $\mathscr L$ be an $n$-dimensional dextral symmetric Leibniz algebra. Then $\mathscr L^{(n+1)}=0$.
	\end{proposition}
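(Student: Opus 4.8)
The plan is to run a dimension count on the descending chain of two-sided ideals
\[
\mathscr L=\mathscr L^{(1)}\supseteq \mathscr L^{(2)}\supseteq \mathscr L^{(3)}\supseteq\cdots,
\]
after first showing that this chain actually reaches $0$ and that it can never stabilise at a nonzero term. Establishing left nilpotency of $\mathscr L$ is the crux; once that is in hand the bound $n+1$ is pure linear algebra.

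First I would argue that $\mathscr L$ is solvable, mimicking the Levi--Malcev argument used earlier. Decompose $\mathscr L=R+S$, with $R$ the solvable radical and $S$ a subalgebra which is a semisimple Lie algebra (\cite[Theorem~1]{barnes}). Since $S$ is a subalgebra of a dextral symmetric algebra, it is itself dextral symmetric by Remark \ref{basic properties of dextral symmetric algebras}; being a Lie algebra it is simultaneously a left and a right Leibniz algebra, so Theorem \ref{sym2} forces $S$ to be anti-associative. Combining anti-associativity with the Jacobi identity collapses every double bracket in $S$ (in characteristic zero), so $S$ is nilpotent, hence solvable, hence $S=0$ because no nonzero Lie algebra is both semisimple and solvable. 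Therefore $\mathscr L=R$ is solvable, and Proposition \ref{solnil3} (equivalently Theorem \ref{dextral symmetricnil}) gives some $N$ with $\mathscr L^{(N)}=0$; that is, $\mathscr L$ is left nilpotent.

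Next I would show that the chain strictly decreases until it vanishes. The point is that stabilisation is permanent: if $\mathscr L^{(k)}=\mathscr L^{(k+1)}$, then
\[
\mathscr L^{(k+2)}=[\mathscr L,\mathscr L^{(k+1)}]=[\mathscr L,\mathscr L^{(k)}]=\mathscr L^{(k+1)},
\]
and by induction $\mathscr L^{(j)}=\mathscr L^{(k)}$ for all $j\geq k$. Since $\mathscr L^{(N)}=0$, the common value must be $0$. Hence whenever $\mathscr L^{(k)}\neq 0$ the inclusion $\mathscr L^{(k+1)}\subsetneq\mathscr L^{(k)}$ is strict, so $\dim \mathscr L^{(k+1)}\leq \dim \mathscr L^{(k)}-1$. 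Starting from $\dim \mathscr L^{(1)}=n$ this yields $\dim \mathscr L^{(k)}\leq n-(k-1)$ as long as $\mathscr L^{(k)}\neq 0$, and in particular $\dim \mathscr L^{(n+1)}\leq 0$, i.e. $\mathscr L^{(n+1)}=0$.

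The main obstacle is the first step: verifying that the semisimple Levi factor $S$ vanishes. This is where the characteristic-zero hypothesis and the Levi--Malcev decomposition are essential, and where one must check carefully that a dextral symmetric (equivalently anti-associative) Lie algebra is solvable; the remaining strictly-descending-chain count is routine.
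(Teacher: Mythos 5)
Your argument is internally coherent, and its second half is sound: once the chain $\mathscr L^{(1)}\supseteq\mathscr L^{(2)}\supseteq\cdots$ is known to reach $0$, the stabilisation observation plus a dimension count does force $\mathscr L^{(n+1)}=0$. The genuine gap is in the first half. Everything you use to establish left nilpotency --- the Levi--Malcev decomposition of \cite[Theorem~1]{barnes}, and the step ``anti-associativity plus Jacobi collapses every double bracket'' (which really only yields $3[x,[y,z]]=0$) --- requires the base field to have characteristic zero, indeed characteristic $\neq 3$ for the latter. But Proposition \ref{leftnil2} carries no characteristic hypothesis, and the paper needs it without one: Corollary \ref{clss} is stated for arbitrary finite-dimensional dextral symmetric Leibniz algebras, and the paper's closing example is a $7$-dimensional dextral symmetric Lie algebra over a field of characteristic $3$ with $\mathscr L^{(3)}\neq 0$, a situation your Levi-factor argument cannot reach. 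So your proof establishes the proposition only over fields of characteristic zero.

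The paper's own proof is elementary and characteristic-free, and bypasses all structure theory: any element of $\mathscr L^{(n+1)}$ is a combination of right-nested brackets $[x_{i_1},[x_{i_2},[\cdots,[x_{i_n},x_{i_{n+1}}]\cdots]]]$ of basis vectors; by pigeonhole two of the $n+1$ entries coincide; Proposition \ref{sym} together with the Leibniz identity lets one permute the entries of such a nested product up to sign so that the repeated pair becomes the innermost bracket, and $[a,[b,b]]=0$ (immediate from the right Leibniz identity) annihilates the term. If you replace your Levi--Malcev step by this combinatorial observation, you not only remove the characteristic restriction but also get the bound $n+1$ directly, making the chain-stabilisation count unnecessary.
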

	\begin{proof}
		Let $\{x_1, x_2, \cdots, x_n\}$ be a basis of $\mathscr L$. Then any element of $\mathscr L^{(n+1)}$ is a linear combination of the following product of $n+1$ basis elements:
		$$[x_{i_{1}},[x_{i_{2}},[\cdots ,[x_{i_n},x_{i_{n+1}}]\cdots]]],$$
		where $i_1, \ldots , i_{n+1}\in \{1,\ldots ,n\}$. It follows that there exists at least one pair of distinct indices $i_{j}$ and $i_{k}$ such that $x_{i_j}=x_{i_k}$. Since $\mathscr L$ is dextral symmetric, by Proposition \ref{sym} and the Leibniz identity,
		$$[x_{i_{1}},[x_{i_{2}},[\cdots ,[x_{i_n},x_{i_{n+1}}]\cdots ]]]=\pm [x_{i_{1}},[x_{i_2}, [\cdots ,[x_{i_{n+1}},[x_{i_{j}},x_{i_{k}}]]\cdots]]]=0$$
		Hence $\mathscr L^{(n+1)}=0$.
	\end{proof}
	\begin{corollary}\label{clss}
		Any finite dimensional dextral symmetric Leibniz algebra is left nilpotent and hence solvable.
	\end{corollary}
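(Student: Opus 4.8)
The plan is to read off both conclusions directly from the machinery already assembled, since the substantive work has been done in Proposition \ref{leftnil2}. First I would observe that left nilpotency is essentially a restatement of that proposition: if $\mathscr L$ has dimension $n$, then Proposition \ref{leftnil2} gives $\mathscr L^{(n+1)}=0$, and since left nilpotency is defined precisely as the existence of some $k'\in\mathbb N$ with $\mathscr L^{(k')}=0$, taking $k'=n+1$ settles the first assertion with no further computation.

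For solvability, I would simply invoke Theorem \ref{dextral symmetricnil}, which asserts that a dextral symmetric Leibniz algebra is solvable if and only if it is left nilpotent; having just exhibited $\mathscr L$ as left nilpotent, solvability follows at once. If one prefers to keep the argument self-contained rather than routing through the theorem, there is an equally short direct route via Proposition \ref{solnil3}. Because each $\mathscr L^{(k)}$ is a two-sided ideal, the series in (ii) is non-increasing, i.e. $\mathscr L^{(k+1)}=[\mathscr L,\mathscr L^{(k)}]\subseteq\mathscr L^{(k)}$; since $2^{n}\geq n+1$ for every $n\geq 1$, the vanishing $\mathscr L^{(n+1)}=0$ forces $\mathscr L^{(2^{n})}=0$. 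Then Proposition \ref{solnil3}, applied with $m=n+1$, identifies $\mathscr L^{[n+1]}=\mathscr L^{(2^{n})}=0$, which is exactly the definition of solvability.

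The main obstacle is not located in the corollary itself but was already dispatched in Proposition \ref{leftnil2}, where the uniform dimension bound on the series $\mathscr L^{(k)}$ was obtained through the pigeonhole argument on repeated basis indices together with Proposition \ref{sym}. Once that bound is in hand, the corollary is a bookkeeping consequence of the definitions and the solvability/left-nilpotency dictionary of Theorem \ref{dextral symmetricnil}. The only point warranting a moment's care in the direct route is the monotonicity $\mathscr L^{(k+1)}\subseteq\mathscr L^{(k)}$ needed to pass from $\mathscr L^{(n+1)}=0$ to $\mathscr L^{(2^{n})}=0$, but this is immediate from the ideal property of the terms of the series.
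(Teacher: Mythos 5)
Your proposal is correct and follows exactly the route the paper intends: left nilpotency is immediate from Proposition \ref{leftnil2} (take $k'=n+1$), and solvability then follows from Theorem \ref{dextral symmetricnil}. The paper leaves the corollary without an explicit proof, and your alternative direct route via Proposition \ref{solnil3} (using the monotonicity $\mathscr L^{(k+1)}\subseteq\mathscr L^{(k)}$) is also sound but adds nothing essential.
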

	
	\section{Classification of complex dextral symmetric Leibniz algebras up to dimension 4}\label{classification}
	In this section, we consider the Leibniz algebras over the field of complex numbers. It follows from Corollary \ref{clss} that to classify 44-dimensional dextral symmetric Leibniz algebra, it is enough to classify them based on the classification of 44-dimensional solvable Leibniz algebras in \cite{omirov}, \cite{ismail} and \cite{elisa}.
	
	In what follows, we mention only the non-zero brackets of basis elements of a Leibniz algebra.
	\begin{proposition}
		If $\mathscr L$ is a $2$-dimensional right nilpotent Liebniz algebra, then $\mathscr L$ is dextral symmetric.
	\end{proposition}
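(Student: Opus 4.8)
The plan is to show that in such an algebra every left-nested triple product $[a,[b,c]]$ already vanishes; once this is in hand, the dextral symmetric condition $[a,[b,c]]=0 \implies [b,[a,c]]=0$ holds trivially, because $[b,[a,c]]$ is itself a left-nested triple product and hence also zero. So the whole argument reduces to proving $\mathscr L^{(3)}=0$, and the zero ideal is then symmetric for free.

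First I would pin down the lower central series $\mathscr L=\mathscr L^{<1>}\supseteq \mathscr L^{<2>}\supseteq\cdots$ using only the dimension constraint and right nilpotency. Since $\mathscr L$ is right nilpotent, this decreasing chain of two-sided ideals cannot stabilise at a nonzero term (if $\mathscr L^{<n>}=\mathscr L^{<n+1>}\neq 0$ for some $n$, then $\mathscr L^{<m>}=\mathscr L^{<n>}\neq 0$ for all $m\geq n$, contradicting $\mathscr L^{<k>}=0$). In particular $\mathscr L^{<2>}=[\mathscr L,\mathscr L]\neq \mathscr L$, so $\dim \mathscr L^{<2>}\leq 1$. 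If $\dim \mathscr L^{<2>}=0$ then $\mathscr L$ is abelian and there is nothing to check. If $\dim\mathscr L^{<2>}=1$, then $\mathscr L^{<3>}=[\mathscr L^{<2>},\mathscr L]$ is a proper subideal of the one-dimensional ideal $\mathscr L^{<2>}$ (equality would again stabilise the chain), forcing $\mathscr L^{<3>}=0$. Thus $\mathscr L^{<3>}=0$ in every case.

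Next I would pass from right-nested to left-nested products through the right Leibniz identity $[a,[b,c]]=[[a,b],c]-[[a,c],b]$. Both summands on the right-hand side lie in $[[\mathscr L,\mathscr L],\mathscr L]=\mathscr L^{<3>}$, so $[a,[b,c]]\in\mathscr L^{<3>}=0$ for all $a,b,c\in\mathscr L$; equivalently $\mathscr L^{(3)}=0$. Consequently, for any $a,b,c$, the hypothesis $[a,[b,c]]=0$ is automatic and the conclusion $[b,[a,c]]=0$ holds for the same reason, so the zero ideal is symmetric and $\mathscr L$ is dextral symmetric.

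The only mildly delicate point is the dimension-counting step guaranteeing $\mathscr L^{<3>}=0$; everything after it is an immediate consequence of the Leibniz identity and the definition of dextral symmetry. (I would avoid invoking the explicit classification of $2$-dimensional right nilpotent Leibniz algebras, since the two-line series argument is both shorter and more transparent.)
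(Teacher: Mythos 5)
Your proof is correct, but it takes a genuinely different route from the paper's. The paper argues via the classification of $2$-dimensional right nilpotent Leibniz algebras: $\mathscr L$ is either abelian or isomorphic to $\gamma_1\colon [x,x]=y$, and in the latter case one checks directly that every product $[a,[b,c]]$ vanishes. You instead deduce $\mathscr L^{<3>}=0$ abstractly, from the strict descent of the lower central series forced by right nilpotency combined with the dimension bound, and then convert right-nested products into left-nested ones via the Leibniz identity $[a,[b,c]]=[[a,b],c]-[[a,c],b]$ to conclude $\mathscr L^{(3)}=0$. Both arguments land in the same place --- every triple product $[a,[b,c]]$ is zero, so the dextral symmetric implication holds vacuously --- but yours is self-contained (no appeal to the classification) and exposes the general mechanism: right nilpotency in dimension $2$ forces $\mathscr L^{<3>}=0$, hence $\mathscr L^{(3)}=0$. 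The paper's version is shorter in context, since the explicit classification is already being quoted throughout that section. The only minor point to make explicit in your write-up is the implicit assumption $\mathscr L\neq 0$ when concluding $\mathscr L^{<2>}\subsetneq\mathscr L$; this is of course harmless, as the zero algebra is trivially dextral symmetric.
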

	\begin{proof}
		If $\mathscr L$ is abelian, then it is clearly dextral symmetric. If it is non abelian, it will be isomorphic to the following algebra:
		$$\gamma_1:~ [x,x]=y,$$
		where $\{x,y\}$ is the basis of $\mathscr L$. Notice that in $\gamma_1$, the element $[a,[b,c]]$ is zero for all $a,b,c \in \mathscr L.$ Hence, $\mathscr L$ is trivially dextral symmetric.
	\end{proof}
	
	\begin{proposition}
		Every $3$-dimensional right nilpotent Leibniz algebra is dextral symmetric.
	\end{proposition}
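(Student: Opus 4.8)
The plan is to prove the much stronger statement that in any $3$-dimensional right nilpotent Leibniz algebra $\mathscr L$ every left-normed triple product already vanishes, i.e. $[x,[y,z]]=0$ for all $x,y,z\in\mathscr L$. Once this is established, dextral symmetry is immediate: both sides of the defining implication $[x,[y,z]]=0\Rightarrow[y,[x,z]]=0$ are then identically zero, exactly as in the treatment of $\gamma_1$ in the two-dimensional proposition above. The starting observation is that, by the Leibniz identity, $[x,[y,z]]=[[x,y],z]-[[x,z],y]$, so that $[\mathscr L,[\mathscr L,\mathscr L]]\subseteq[[\mathscr L,\mathscr L],\mathscr L]=\mathscr L^{<3>}$. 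Hence it suffices to control $\mathscr L^{<3>}$, and in the case where it is nonzero to show that the left products nonetheless land in $0$.

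Next I would split the argument according to $\dim\mathscr L^{<2>}$, where $\mathscr L^{<2>}=[\mathscr L,\mathscr L]$. Since $\mathscr L$ is right nilpotent, equivalently nilpotent by \cite{ayupov}, the chain $\mathscr L\supsetneq\mathscr L^{<2>}\supsetneq\mathscr L^{<3>}\supsetneq\cdots$ strictly decreases until it reaches $0$, so $\dim\mathscr L^{<2>}\in\{0,1,2\}$. If $\dim\mathscr L^{<2>}\le 1$, then $\mathscr L^{<3>}=[\mathscr L^{<2>},\mathscr L]$ is a proper subspace of the at most one-dimensional space $\mathscr L^{<2>}$, forcing $\mathscr L^{<3>}=0$; consequently $[\mathscr L,[\mathscr L,\mathscr L]]\subseteq\mathscr L^{<3>}=0$ and we are done. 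The only remaining case is $\dim\mathscr L^{<2>}=2$, which forces $\dim\mathscr L^{<3>}=1$ and $\mathscr L^{<4>}=0$.

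For this last case I would fix a basis adapted to the filtration, say $\mathscr L^{<2>}=\langle e_2,e_3\rangle$ and $\mathscr L^{<3>}=\langle e_3\rangle$ with $e_1\notin\mathscr L^{<2>}$, and normalise $e_2:=[e_1,e_1]$. This is legitimate because the $e_2$-component of $[\mathscr L,\mathscr L]$ can only come from $[e_1,e_1]$: every other basic bracket lies in $\mathscr L^{<3>}=\langle e_3\rangle$, since $[e_2,\mathscr L],[e_3,\mathscr L]\subseteq[\mathscr L^{<2>},\mathscr L]=\mathscr L^{<3>}$ and $[e_1,e_2],[e_1,e_3]\in[\mathscr L,\mathscr L^{<2>}]\subseteq\mathscr L^{<3>}$. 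Writing the remaining unknown brackets as $[e_2,e_1]=p\,e_3$, $[e_1,e_3]=q\,e_3$, $[e_2,e_3]=r\,e_3$, with $[e_1,e_2]=[e_2,e_2]=0$ coming from $[a,[b,b]]=0$ and $[e_3,\mathscr L]=0$ from $\mathscr L^{<4>}=0$, I would feed a handful of instances of the Leibniz identity back in: the triples $(e_1,e_1,e_3)$, $(e_2,e_1,e_3)$, $(e_1,e_2,e_1)$ and $(e_1,e_3,e_1)$ yield $q^2=r$, $qr=0$, $pq=0$ and $r=0$, which together force $q=r=0$. After rescaling $e_3$ so that $p=1$, the algebra is the filiform one with the single extra bracket $[e_2,e_1]=e_3$, and in it $[\mathscr L,\mathscr L^{<2>}]=0$, completing the proof.

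The step I expect to be the main obstacle is precisely this case $\dim[\mathscr L,\mathscr L]=2$: a priori one only knows $[\mathscr L,[\mathscr L,\mathscr L]]\subseteq\mathscr L^{<3>}\cong\mathbb C$, and the real content is to rule out a genuinely nonzero left triple product by extracting the right Leibniz relations above. Should the bookkeeping become unwieldy, an equivalent route is to invoke the classification of three-dimensional nilpotent (equivalently right nilpotent) Leibniz algebras over $\mathbb C$ from \cite{omirov} or \cite{elisa} and verify algebra-by-algebra that all left-normed triple products vanish, mirroring the two-dimensional proposition; the structural computation above is essentially a self-contained derivation of the one nonabelian normal form that actually occurs.
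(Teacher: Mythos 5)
Your proof is correct, but it takes a genuinely different route from the paper's. The paper simply lists the pairwise non-isomorphic $3$-dimensional right nilpotent Leibniz algebras $\mu_1,\ldots,\mu_5$ from the known classification and observes that $\mathscr L^{(3)}=0$ in every case, so the dextral symmetric implication holds vacuously. You aim at the same stronger statement $\mathscr L^{(3)}=0$ --- which is exactly the right target --- but derive it structurally from the lower central series instead of from the classification: the Leibniz identity gives $[\mathscr L,[\mathscr L,\mathscr L]]\subseteq \mathscr L^{<3>}$, which disposes of everything except the case $\dim\mathscr L^{<2>}=2$ (where strictness of the chain gives $\dim\mathscr L^{<3>}\leq 1$, the subcase $\mathscr L^{<3>}=0$ being already covered); there your adapted basis, together with $[a,[b,b]]=0$ and $[\mathscr L^{<3>},\mathscr L]=\mathscr L^{<4>}=0$, leaves only the unknowns $p,q,r$, and your four instances of the right Leibniz identity do yield $q^2=r$, $qr=0$, $pq=0$ and $r=0$, forcing the filiform algebra $[e_1,e_1]=e_2$, $[e_2,e_1]=e_3$ (the paper's $\mu_5$), in which $[\mathscr L,\mathscr L^{<2>}]=0$. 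The trade-off: the paper's argument is shorter and fits its program of working through the published classifications in dimensions $2$, $3$ and $4$ over $\mathbb C$, while yours is self-contained, works over an arbitrary base field, and incidentally re-derives the one nonabelian normal form with $\mathscr L^{<3>}\neq 0$. Both arguments are sound and reach the same conclusion.
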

	\begin{proof}
		Let $\mathscr L$ be a $3$-dimensional right nilpotent Leibniz algebra. The result is true if $\mathscr L$ is abelian. If $\mathscr L$ is non abelian, and if $\{x,y,z\}$ is its basis, it is isomorphic to one of the following pairwise non-isomorphic algebras:
		\begin{align*}
			\mu_1 :~&[x,x]=z~;
			\\\mu_2:~&[x,y]=z, [y,x]=-z~;
			\\\mu_3(\alpha):~ &[x,x]=z,~ [y,y]=\alpha z,~ [x,y]=z~;
			\\ \mu_4:~ &[y,x]=z, ~ [x,y]=z ~;
			\\ \mu_5:~ &[x,x]=y,~ [y,x]=z.
		\end{align*}
		In each of the above cases, $\mathscr L^{(3)}=0.$ Therefore, $\mathscr L$ is dextral symmetric.
	\end{proof}
	
	We assume that a 4-dimensional right Leibinz algebra is spanned by $\{x,y,z,w\}$.
	
	\begin{theorem}
		There are exactly $18$ isomorphism classes of $4$-dimensional right nilpotent dextral symmetric Leibniz algebras.
	\end{theorem}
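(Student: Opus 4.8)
The plan is to start from the complete classification of $4$-dimensional complex right nilpotent Leibniz algebras (equivalently, by \cite{ayupov}, nilpotent ones) assembled from \cite{omirov}, \cite{ismail} and \cite{elisa}, and then to sieve that finite list by the dextral symmetric condition. Since dextral symmetry is preserved in both directions under isomorphism (it is the injective-homomorphic-image statement of Remark \ref{basic properties of dextral symmetric algebras} applied to an isomorphism and its inverse), it is an isomorphism invariant and may be tested on a single representative of each class. So the whole argument reduces to running through the representatives on the reference lists and recording which are dextral symmetric; the theorem is the assertion that exactly $18$ survive.

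The first reduction isolates the algebras for which the test is automatic. If $\mathscr L^{(3)}=[\mathscr L,[\mathscr L,\mathscr L]]=0$, then for all $a,b,c\in\mathscr L$ the element $[b,[a,c]]$ lies in $[\mathscr L,\mathscr L^{(2)}]=\mathscr L^{(3)}=0$, so the implication $[a,[b,c]]=0\Rightarrow[b,[a,c]]=0$ holds vacuously and $\mathscr L$ is dextral symmetric. This is exactly the phenomenon that made every $2$- and $3$-dimensional right nilpotent Leibniz algebra dextral symmetric, and it disposes at once of every algebra on the list whose triple products vanish. Hence only the algebras with $\mathscr L^{(3)}\neq 0$ require genuine work.

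For those remaining algebras I would use the observation that, for a right Leibniz algebra, dextral symmetry is \emph{equivalent} to the trilinear identity $[x,[y,z]]=-[y,[x,z]]$ holding for all $x,y,z$: one direction is contained in Proposition \ref{sym}, and conversely if the identity holds then $[a,[b,c]]=0$ forces $[b,[a,c]]=-[a,[b,c]]=0$. This turns the test into a purely mechanical check on basis triples $x,y,z$ read off from the given multiplication table. For each algebra with nonvanishing triple products I would therefore either verify this identity on all basis triples, confirming dextral symmetry, or exhibit a single basis triple with $[x,[y,z]]\neq -[y,[x,z]]$ (equivalently an $a,b,c$ with $[a,[b,c]]=0$ but $[b,[a,c]]\neq 0$), ruling the algebra out. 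Summing the automatically symmetric algebras and the survivors of this identity check produces the count.

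The main obstacle is bookkeeping rather than conceptual. The classification contains one-parameter families, and for these I must determine precisely which parameter values force the identity $[x,[y,z]]=-[y,[x,z]]$, and then count isomorphism classes correctly inside each family, being careful that distinct parameters can yield isomorphic algebras and that imposing dextral symmetry may restrict or collapse a family to finitely many classes. Getting this parameter analysis right, together with an exhaustive (but routine) pass through every algebra with nonvanishing triple products, is what pins the final tally at exactly $18$.
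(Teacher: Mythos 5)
Your proposal follows essentially the same route as the paper: the paper lists the $22$ isomorphism classes from the cited classifications, observes that $18$ of them satisfy $\mathscr N_i^{(3)}=0$ (hence are vacuously dextral symmetric, exactly your first reduction), and rules out the remaining four ($\mathscr N_7$ through $\mathscr N_{10}$) by exhibiting a triple with $[y,[x,x]]=0$ but $[x,[y,x]]\neq 0$, which is precisely your counterexample test. Your additional remark that dextral symmetry is equivalent to the trilinear identity $[x,[y,z]]=-[y,[x,z]]$ is a correct and slightly more systematic packaging of the same check, but does not change the argument.
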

	\begin{proof}
		According to \cite{omirov} and \cite{ismail}, the isomorphism classes of right nilpotent Leibniz algebras are given by the following algebras.   
		\begin{align*}
			\mathscr N_1 &:~ [x,x]=y, ~ [y,x]=z, ~[z,x]=w ~;
			\\ \mathscr N_2 &:~ [x,x]=z,~[x,y]=w,~[y,x]=z,~[z,x]=w~;
			\\ \mathscr N_3 &:~ [x,x]=z,~[y,x]=z,~[z,x]=w~;
			\\ \mathscr N_4(\alpha) &:~ [x,x]=z,~[x,y]= \alpha w,~[y,x]=z,~[y,y]=w,~[z,x]=w,~ \alpha \in \lbrace 0,1 \rbrace ~;
			\\ \mathscr N_5 &:~[x,x]=z,~[x,y]=w,~[z,x]=w~;
			\\ \mathscr N_6 &:~[x,x]=z,~[y,y]=w,~[z,x]=w~;
			\\ \mathscr N_7 &:~[x,x]=w,~[y,x]=z,~[z,x]=w,~[x,y]=-z,~[x,z]=-w~;
			\\ \mathscr N_8 &:~[x,x]=w,~[y,x]=z,~[z,x]=w,~[x,y]=-z+w,~[x,z]=-w~;
			\\ \mathscr N_9 &:~[x,x]=w,~[y,x]=z,~[y,y]=w,~[z,x]=w,~[x,y]=-z+2w,~[x,z]=-w~;
			\\ \mathscr N_{10} &:~[x,x]=w,~[y,x]=z,~[y,y]=w,~[z,x]=w,~[x,y]=-z,~[x,z]=-w~;
			\\ \mathscr N_{11} &:~[x,x]=w,~[x,y]=z,~[y,x]=-z,~[y,y]=-2z+w~;
			\\ \mathscr N_{12} &:~[x,y]=z,~[y,x]=w,~[y,y]=-z~;
			\\ \mathscr N_{13}(\alpha) &:~[x,x]=z,~[x,y]=w,~[y,x]=-\alpha z, ~[y,y]=-w,~ \alpha \in \mathbb C~;
			\\ \mathscr N_{14}(\alpha) &:~[x,x]=w,~[x,y]= \alpha w,~[y,x]=- \alpha w,~[y,y]=w,~[z,z]=w,~ \alpha \in \mathbb C~;
			\\\mathscr N_{15} &:~[x,y]=w,~[x,z]=w,~[y,x]=-w,~[y,y]=w,~[z,x]=w~:
			\\ \mathscr N_{16} &:~[x,x]=w,~[x,y]=w,~[y,x]=-w,~[z,z]=w~;
			\\ \mathscr N_{17} &:~[x,y]=z,~[y,x]=w~;
			\\ \mathscr N_{18} &:~[x,y]=z,~[y,x]-z,~[y,y]=w~;
			\\ \mathscr N_{19} &:~[y,x]=w,~[y,y]=z~;
			\\ \mathscr N_{20}(\alpha) &:~[x,y]=w,~[y,x]= \frac{1+\alpha}{1-\alpha}w,~[y,y]=z,~ \alpha \in \mathbb C \setminus \{1\}~;
			\\ \mathscr N_{21}&:~[x,y]=w,~[y,x]=-w,~[z,z]=w ~;
			\\ \mathscr N_{22}&:~[z,x]=w,~[y,z]=w.
		\end{align*}
		Notice that in each of the algebras from $\mathscr N_7$ to $\mathscr N_{10}$ we have $[y,[x,x]]=0$ but $[x,[y,x]] \neq 0$. Whereas, for all the other $18$ isomorphism classes, $\mathscr N_i^{(3)}=0$. 
	\end{proof}
	
	\begin{theorem}\label{2dimnil}
		A $4$-dimensional solvable Leibniz algebra with 2-dimensional nilradical is dextral symmetric if and only if it is isomorphic to the algebra
		$$\mathscr R_1: ~[x,z]=x, ~[y,w]=y.$$
	\end{theorem}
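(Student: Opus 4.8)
The plan is to leverage the existing classification of $4$-dimensional solvable Leibniz algebras with a $2$-dimensional nilradical found in \cite{omirov, ismail, elisa}, and to test each isomorphism class in that list against the dextral symmetric condition. Since dextral symmetry is an isomorphism invariant (a consequence of Remark \ref{basic properties of dextral symmetric algebras}, as an isomorphism is an injective homomorphism), it suffices to examine a single representative from each class, together with every admissible value of the structure parameters occurring in the parametrized families. The recurring two cases for the nilradical itself are the abelian $2$-dimensional algebra and the non-abelian nilpotent one, which organizes the list into manageable blocks.

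For the \emph{if} direction I would verify directly that $\mathscr R_1$ is dextral symmetric. With basis $\{x,y,z,w\}$ and only the two nonzero brackets $[x,z]=x$ and $[y,w]=y$, one observes that neither $x$ nor $y$ ever occurs as the right-hand entry of a nonzero bracket, so $[a,x]=[a,y]=0$ for every $a \in \mathscr R_1$. Since every bracket $[b,c]$ is a scalar multiple of $x$, of $y$, or zero, it follows that every double bracket $[a,[b,c]]$ vanishes, i.e. $\mathscr R_1^{(3)}=0$. The defining implication $[a,[b,c]]=0 \implies [b,[a,c]]=0$ is then satisfied trivially, so $\mathscr R_1$ is dextral symmetric.

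For the \emph{only if} direction the main tool is the contrapositive of Proposition \ref{sym}: any dextral symmetric Leibniz algebra must satisfy the cyclic and anti-cyclic relations $[x,[y,z]]=[y,[z,x]]=[z,[x,y]]=-[x,[z,y]]=-[y,[x,z]]=-[z,[y,x]]$ on all triples, in particular on basis triples. Hence, for each class other than $\mathscr R_1$, it is enough to exhibit one basis triple $(a,b,c)$ violating some such relation; most efficiently, a triple with $[a,[b,c]]=0$ but $[b,[a,c]]\neq 0$. I would traverse the list and record one witnessing triple for every class, thereby disqualifying it. For classes satisfying $\mathscr L^{(3)}=0$ (which are automatically dextral symmetric) one must instead argue that they fall outside the hypotheses or are already isomorphic to $\mathscr R_1$ via an explicit change of basis.

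The hard part will be the parametrized families: for an algebra depending on a parameter $\alpha$ one must confirm that the disqualifying defect is nonzero for \emph{every} admissible $\alpha$, and conversely be certain that no specialization secretly yields an algebra isomorphic to $\mathscr R_1$. The clean way to handle this is to select a witnessing triple whose defect $[b,[a,c]]-\text{(expected value)}$ is a nonzero polynomial in $\alpha$, vanishing only at values already excluded by the solvability or nilradical constraints; the finitely many exceptional values are then treated individually, and any survivor is matched against $\mathscr R_1$ by producing an explicit isomorphism. Aside from this parameter bookkeeping, the entire verification is a routine finite computation over $\mathbb C$.
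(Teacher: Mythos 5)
Your proposal is correct and follows essentially the same route as the paper: cite the classification (which here consists of just three parameter-free algebras $\mathscr R_1$, $\mathscr S_2$, $\mathscr S_3$), observe that $\mathscr R_1^{(3)}=0$ so $\mathscr R_1$ is trivially dextral symmetric, and disqualify the other two by a witnessing triple — the paper uses the triple $(x,z,z)$, exploiting that $[x,[z,z]]=0$ holds automatically in a right Leibniz algebra while $[z,[x,z]]=-x\neq 0$. The parameter bookkeeping you anticipate turns out to be unnecessary for this particular nilradical.
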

	\begin{proof}
		A $4$-dimensional solvable Leibniz algebra with 2-dimensional nilradical is isomorphic to the following pairwise non-isomorphic algebras:
		\begin{align*}
			&\mathscr R_1: ~[x,z]=x, ~[y,w]=y~;
			\\&\mathscr S_2:~ [x,z]=x, ~ [y,w]=y,~ [z,x]=-x, ~ [w,y]=-y~;
			\\& \mathscr S_3:~ [x,z]=x,~ [y,w]=y, ~[w,y]=-y.
		\end{align*}
		Since in $\mathscr R_1$, $[a,[b,c]]=0$ for each $a,b,c \in \mathscr R_1$, it is clearly dextral symmetric.
		
		We now claim that $\mathscr S_2$ and $\mathscr S_3$ are not dextral symmetric. We will only show that $\mathscr S_2$ is not dextral symmetric, as the proof for $\mathscr S_3$ is similar.
		
		Assume for contradiction that $\mathscr S_2$ is dextral symmetric. Then $[b,[a,b]]=0$ for all $a,b \in \mathscr S_2$. However, $[z,[x,z]]=-x \neq 0$. Thus $\mathscr S_2$ can not be dextral symmetric.
	\end{proof}
	
	\begin{remark}
		Note that in Theorem \ref{2dimnil}, $\mathscr S_2$ and $\mathscr S_3$ are not left nilpotent. As in both the algebras, the element 
		$$\underbrace{[w,[w,[w, \cdots w,[w}_{n-times},y]\cdots]]]= (-1)^ny \neq 0, \text{ for any } n \in \mathbb N.$$
		Hence, $\mathscr S_i^{(n)} \neq 0$ for all $n \in \mathbb N$, where $i\in \{2,3\}.$
	\end{remark}
	
	\begin{proposition}
		There exists no dextral symmetric Leibniz algebra of dimension $4$ with nilradical isomorphic to $\mu_2$.
	\end{proposition}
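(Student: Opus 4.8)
The plan is to fix a basis adapted to the nilradical, use Proposition \ref{sym} to kill most of the structure constants, and then invoke the automatic left nilpotency of Corollary \ref{clss} to force the algebra to be right nilpotent, which is incompatible with having a three-dimensional nilradical.

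First I would set up the general shape of such an algebra. Write $\mathscr L = N \oplus \langle w\rangle$ as vector spaces, where $N=\langle x,y,z\rangle\cong\mu_2$, so that $[x,y]=z=-[y,x]$ and all other brackets inside $N$ vanish. Since $N$ is a two-sided ideal, the right and left multiplications $R_w=[\,\cdot\,,w]$ and $L_w=[w,\,\cdot\,]$ map $N$ into $N$; moreover $\mathscr L/N$ is a one-dimensional Leibniz algebra, whose bracket must be trivial (the alternative $[\bar w,\bar w]=\bar w$ violates the Leibniz identity), so $[w,w]\in N$. The only data are then the matrices of $R_w,L_w$ on $N$ together with $[w,w]$; write $[x,w]=a_1x+a_2y+a_3z$ and $[y,w]=b_1x+b_2y+b_3z$.

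Next I would feed degenerate triples into Proposition \ref{sym}. The relation $[w,[x,x]]=[x,[x,w]]$ reduces to $0=a_2z$, hence $a_2=0$, and symmetrically $[w,[y,y]]=[y,[y,w]]$ gives $b_1=0$; since $R_w$ is a derivation of $N$ this already forces $[z,w]=(a_1+b_2)z$. Then the cyclic relations $[w,[x,y]]=[x,[y,w]]$ and $[w,[y,x]]=[y,[x,w]]$ evaluate to $[w,z]=b_2z$ and $[w,z]=a_1z$, so $a_1=b_2$. I expect this to be the main obstacle: these purely dextral symmetric identities leave the common scalar $a_1=b_2$ undetermined, and for $a_1=b_2\neq 0$ the operator $R_w$ is genuinely non-nilpotent, so $N$ could a priori be the nilradical. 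The crux is to exclude this, and for that I would use Corollary \ref{clss}: being finite-dimensional and dextral symmetric, $\mathscr L$ is left nilpotent, so $\mathscr L^{(m)}=0$ for some $m$. Because $z=[x,y]\in\mathscr L^{(2)}$ and $[w,z]=a_1z$, iterating the left bracket yields $a_1^{\,k}z\in\mathscr L^{(k+2)}$, which forces $a_1=0$ and hence $b_2=0$.

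Finally I would derive the contradiction. With $a_1=a_2=b_1=b_2=0$ the operator $R_w$ on $N$ acts by
\[
x\mapsto a_3z,\qquad y\mapsto b_3z,\qquad z\mapsto 0,
\]
so that every right multiplication $R_u$ (for $u\in\mathscr L$) maps $N$ into $\langle z\rangle$ and annihilates $\langle z\rangle$, while the explicit brackets give $[\mathscr L,\mathscr L]\subseteq N$. Consequently any threefold right-nested product lies in $R_uR_{u'}(N)\subseteq R_u(\langle z\rangle)=0$, whence $\mathscr L^{<4>}=0$ and $\mathscr L$ is right nilpotent. But a right nilpotent algebra is its own nilradical, so the nilradical would be four-dimensional, contradicting the assumption that it is isomorphic to the three-dimensional $\mu_2$. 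Therefore no dextral symmetric Leibniz algebra of dimension $4$ with nilradical $\mu_2$ exists.
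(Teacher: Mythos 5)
Your proof is correct, but it takes a genuinely different route from the paper. The paper's proof quotes the known classification of $4$-dimensional solvable Leibniz algebras with nilradical $\mu_2$ (the three families $\mathscr L_1(\lambda)$, $\mathscr L_2$, $\mathscr L_3$) and then exhibits, in each family, elements $a,b$ with $[b,[a,b]]\neq 0$. You instead argue structurally: writing $\mathscr L=N\oplus\langle w\rangle$ with $N\cong\mu_2$, you use the degenerate instances of Proposition \ref{sym} to kill $a_2$ and $b_1$ and to force $[w,z]=a_1z=b_2z$, then invoke the automatic left nilpotency of Corollary \ref{clss} to conclude $a_1=b_2=0$ (since otherwise $a_1^k z\in\mathscr L^{(k+2)}$ never vanishes), and finally observe that the surviving structure constants make $\mathscr L^{<4>}=0$, so $\mathscr L$ would be nilpotent and its nilradical four-dimensional --- a contradiction. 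I checked the computations ($R_w$ being a derivation of $N$ by the right Leibniz identity, the evaluation of the cyclic relations, and the final right-nilpotency estimate) and they are sound. What your approach buys is independence from the external classification results of \cite{omirov}, \cite{ismail} and \cite{elisa}: it isolates the conceptual reason for the non-existence, namely that dextral symmetry forces the adjoint action of any complement of the nilradical to be nilpotent on the derived subalgebra, which is incompatible with $\mu_2$ being the full nilradical. What the paper's approach buys is brevity (three short case checks) and uniformity with the surrounding classification results, which are all organized around the same list of isomorphism classes.
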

	\begin{proof}
		Any $4$-dimensional solvable Leibniz algebra with nilradical isomorphic to $\mu_2$ is isomorphic to one of the following algebras which are pairwise non-isomorphic:
		\begin{align*}
			\mathscr L_1(\lambda): ~ &[y,z]=w,
			~[z,y]=-w,
			~[y,x]=y,
			~[z,x]= \lambda z,
			~[w,x]=(1+\lambda)w,
			\\&[x,y]=-y,
			~[x,z]=-\lambda z,
			~[x,w]=(1+\lambda)w.
			\\\mathscr L_2: ~&[y,z]=w,
			~[z,y]=-w,
			~[y,x]=y,
			~[x,y]=-y,
			\\&[z,x]=-z,
			~[x,z]=z,
			~[x,x]=w. 
			\\ \mathscr L_3:~ &[y,z]=w, ~[z,y]=-w, ~[y,x]=y+z,~ [x,y]=-y-z, \\&[z,x]=z,~[x,z]=-z, [w,x]=2w,~ [x,w]=-2w.
		\end{align*}
		Similar to the proof of Theorem \ref{2dimnil}, we can show that in each of the above mentioned algebras, there exists elements $a,b$ such that $[b,[a,b]] \neq 0$. Which yields that the algebras $\mathscr L_1(\lambda)$, $\mathscr L_2$ and $\mathscr L_3$ are not dextral symmetric.
	\end{proof}
	\begin{remark}
		$\mathscr L_1(\lambda)$, $\mathscr L_2$ and $\mathscr L_3$ are not left nilpotent.
	\end{remark}
	
	\begin{proposition}
		A $4$-dimensional solvable Leibniz algebra with nilradical isomorphic to $\mu_3(\alpha)$ is dextral symmetric if and only if it is isomorphic to the algebra
		$$\mathscr R_2: ~ [z,y]=w, ~ [z,x]=z, ~ [w,x]=w.$$
	\end{proposition}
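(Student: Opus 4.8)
The plan is to mirror the strategy already used for Theorem \ref{2dimnil} and for the preceding proposition on the nilradical $\mu_2$. First I would invoke the classification of $4$-dimensional complex solvable Leibniz algebras whose nilradical is isomorphic to $\mu_3(\alpha)$, as recorded in \cite{omirov}, \cite{ismail} and \cite{elisa}. This produces a finite list of pairwise non-isomorphic algebras (in general containing continuous parameters), exactly one of which is $\mathscr R_2$. The task then splits into verifying dextral symmetry for $\mathscr R_2$ and ruling it out for every other algebra on the list.

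The main tool is a necessary condition for dextral symmetry that requires no computation to state. In any right Leibniz algebra the identity $[a,[b,b]]=0$ holds for all $a,b$, since it is the right Leibniz identity with its last two arguments set equal. If $\mathscr L$ is dextral symmetric, applying the defining implication to $[a,[b,b]]=0$ yields $[b,[a,b]]=0$ for all $a,b\in\mathscr L$; this is precisely the $z=x$ specialisation of the relation $[x,[y,z]]=-[z,[y,x]]$ from Proposition \ref{sym}. Consequently, to show that an algebra on the list is \emph{not} dextral symmetric it suffices to exhibit a single pair of basis elements $a,b$ with $[b,[a,b]]\neq 0$.

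For the ``if'' direction I would check $\mathscr R_2$ directly. Its only nonzero brackets are $[z,y]=w$, $[z,x]=z$ and $[w,x]=w$, so every inner bracket $[b,c]$ of basis elements lies in $\{0,z,w\}$. Since $[a,z]=[a,w]=0$ for every basis element $a$, it follows that every double bracket $[a,[b,c]]$ vanishes, hence $[a,[b,c]]=0$ for all $a,b,c\in\mathscr R_2$. The dextral symmetry condition therefore holds trivially, so $\mathscr R_2$ is dextral symmetric.

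For the ``only if'' direction I would run through the remaining algebras one at a time and, for each, produce a witness pair $a,b$ with $[b,[a,b]]\neq 0$, contradicting the necessary condition above; this is exactly the type of computation carried out for $\mathscr S_2,\mathscr S_3$ and for $\mathscr L_1(\lambda),\mathscr L_2,\mathscr L_3$. The only genuine difficulty is bookkeeping: because the classification contains parametrised families, each chosen witness must be shown to give a nonzero value for every admissible parameter, with any finitely many exceptional parameter values treated separately. There is no conceptual obstacle beyond guaranteeing that the case analysis is exhaustive and that the witnesses are uniform across the parameter ranges.
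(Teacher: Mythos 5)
Your proposal is correct and follows exactly the strategy the paper uses for the neighbouring results (Theorem \ref{2dimnil} and the $\mu_2$ proposition): verify that all double brackets in $\mathscr R_2$ vanish, and eliminate the other algebras in the classification via the necessary condition $[b,[a,b]]=0$ derived from $[a,[b,b]]=0$ and dextral symmetry. The paper in fact states this proposition without any proof, so your write-up, which makes the $\mathscr R_2$ verification explicit and correctly identifies the elimination test, is if anything more complete than what appears in the text.
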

	
	\begin{proposition}
		A $4$-dimensional solvable Leibniz algebra with nilradical isomorphic to $\mu_1$ is dextral symmetric if and only if it is isomorphic to the following three pairwise non-isomorphic algebras:
		\begin{align*}
			\mathscr R_3{(\beta)}: ~&[y,y]=w, ~[z,x]=z, ~[x,y]=w, ~[x,x]=\beta w, ~ w\in \mathbb C~; \\
			\mathscr R_4:~&[y,y]=w, ~[z,x]=z~;
			\\ 
			\mathscr R_5: ~&[y,y]=w, ~[z,x]=z, ~[x,y]=w, ~[y,x]=w.
		\end{align*}
	\end{proposition}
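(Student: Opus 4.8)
The plan is to follow exactly the template used in Theorem~\ref{2dimnil} and the preceding propositions: start from the known classification of $4$-dimensional solvable Leibniz algebras whose nilradical is isomorphic to $\mu_1$, test each isomorphism class against the dextral symmetric condition, and show that precisely $\mathscr R_3(\beta)$, $\mathscr R_4$ and $\mathscr R_5$ survive. First I would recall from \cite{elisa} (together with \cite{omirov,ismail}) the full list of pairwise non-isomorphic such algebras in a fixed basis $\{x,y,z,w\}$ adapted to the nilradical, so that $z,w$ span the part on which no further brackets act. This list is finite, though several entries occur in one-parameter families, and each algebra is presented by a finite table of non-zero brackets.

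For the sufficiency (``if'') direction I would verify that each of $\mathscr R_3(\beta)$, $\mathscr R_4$ and $\mathscr R_5$ is dextral symmetric, and the cleanest route is to observe that in all three cases $\mathscr L^{(3)}=[\mathscr L,\mathscr L^{(2)}]=0$. Indeed, in each algebra one checks that $[\mathscr L,\mathscr L]\subseteq\operatorname{span}\{z,w\}$, while neither $z$ nor $w$ ever occurs as the right-hand entry of a non-zero bracket; hence $[\mathscr L,\operatorname{span}\{z,w\}]=0$ and therefore $[a,[b,c]]=0$ for all $a,b,c$. The dextral symmetric implication $[a,[b,c]]=0\Rightarrow[b,[a,c]]=0$ then holds vacuously. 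For $\mathscr R_3(\beta)$ this holds for \emph{every} value of $\beta$, which is exactly why the entire family appears in the conclusion.

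For the necessity (``only if'') direction I would use the necessary condition extracted from Proposition~\ref{sym}: applying the dextral symmetric property to the Leibniz identity $[a,[b,b]]=0$ (valid in any right Leibniz algebra) yields $[b,[a,b]]=0$ for all $a,b$ in any dextral symmetric Leibniz algebra. For every algebra in the classification other than $\mathscr R_3(\beta)$, $\mathscr R_4$, $\mathscr R_5$---and, within each parametric family, for every parameter value outside the listed ones---I would exhibit an explicit pair $a,b$ (typically $b$ a solvability generator acting on a nilradical element $a$) with $[b,[a,b]]\neq 0$, precisely as was done for $\mathscr S_2$ in Theorem~\ref{2dimnil}. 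A single such witness rules out each remaining class.

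The main obstacle will be the bookkeeping over the full classification: the list of solvable extensions of the $3$-dimensional nilradical $\mu_1$ is considerably longer than in the $2$-dimensional nilradical case, and the parametric families require care. For each family one must determine exactly which parameter values can satisfy $[b,[a,b]]=0$ and then confirm that these coincide with $\mathscr R_3(\beta)$, $\mathscr R_4$, $\mathscr R_5$ rather than producing spurious extra cases or missing boundary values. The conceptual content---the $[b,[a,b]]=0$ test for necessity together with the $\mathscr L^{(3)}=0$ observation for sufficiency---is routine; the genuine work lies in carrying out the exhaustive case analysis without error.
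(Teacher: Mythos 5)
Your plan is exactly the template the paper uses for the neighbouring classification results (Theorem \ref{2dimnil} and the $\mu_2$ case): sufficiency via checking $\mathscr L^{(3)}=0$ so that the dextral symmetric condition holds vacuously, and necessity via exhibiting a witness $[b,[a,b]]\neq 0$ in every other class of the list from \cite{elisa}; indeed the paper's subsequent remark that $\mathscr R_3(\beta)$, $\mathscr R_4$, $\mathscr R_5$ have left nilpotency index $3$ confirms your sufficiency computation. The paper omits the proof of this particular proposition as routine, so your proposal is a faithful reconstruction of the intended argument, with the only remaining work being the mechanical case-by-case check you correctly identify.
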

	
	\begin{remark}
		Note that $\mathscr R_3(\beta)$ and $\mathscr R_i$ are left nilpotent for each $i\in \{2,4,5\}$ with index of left nilpotency equal to $3$.
	\end{remark}
	
	\begin{proposition}
		There is no $4$-dimensional dextral symmetric solvable Leibniz algebra with nilradical isomorphic to $\mu_4$ or $\mu_5$.
	\end{proposition}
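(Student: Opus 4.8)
The plan is to follow the same device used in Theorem~\ref{2dimnil} and in the treatment of the $\mu_2$-extensions: to refute dextral symmetry for each algebra on the relevant list by producing one violating product. The underlying criterion is that in a dextral symmetric right Leibniz algebra $\mathscr L$ one always has $[b,[a,b]]=0$ for all $a,b\in\mathscr L$. This follows from Proposition~\ref{sym}, but can also be seen directly: setting $c=b$ in the right Leibniz identity $[a,[b,c]]=[[a,b],c]-[[a,c],b]$ gives $[a,[b,b]]=0$, and applying the dextral symmetric implication (if $[a,[b,c]]=0$ then $[b,[a,c]]=0$) with $c=b$ to this relation yields
\[[b,[a,b]]=0\qquad\text{for all }a,b\in\mathscr L.\]
Hence it suffices, for every candidate algebra, to exhibit a single pair $a,b$ with $[b,[a,b]]\neq 0$.

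Next I would import the classification of $4$-dimensional solvable Leibniz algebras with nilradical isomorphic to $\mu_4$, respectively $\mu_5$, from \cite{omirov}, \cite{ismail} and \cite{elisa}; this provides a finite list, including a few one-parameter families. Since the nilradical is three-dimensional, the adjoined fourth basis vector $t$ necessarily acts on it through a non-trivial (right) derivation together with a left action --- non-triviality being forced, since a trivial action would enlarge the nilradical to all of $\mathscr L$. For each algebra on the list I would then compute $[b,[a,b]]$ for $b$ running over the nilradical generators and $a=t$ (and the mirror products), and exhibit the required non-zero value. For the $\mu_5$-extensions, for example, the action scales the top generator $x$ (with $[x,x]=y$), so a non-zero $x$-component in $[t,x]$ already gives $[x,[t,x]]=cy\neq 0$; for the $\mu_4$-extensions a companion combination of the generators $x,y$ (for which $[x,y]=[y,x]=z\neq 0$) plays the analogous role.

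The bracket evaluations are routine and entirely analogous to those already carried out for $\mathscr S_2$ and $\mathscr S_3$, so the conceptual content is light. The main obstacle is organizational: one must guarantee that a violating pair exists \emph{uniformly} over every isomorphism class and every admissible value of the parameters, so that no extension escapes as dextral symmetric. In contrast with the $\mu_1$- and $\mu_3(\alpha)$-nilradicals, where genuine dextral symmetric extensions do occur, here the assertion is the stronger one that the product $[b,[a,b]]$ can never be made to vanish identically; confirming this for the parameter families --- where the convenient witness may depend on which structure constants are non-zero --- is the only step that requires care.
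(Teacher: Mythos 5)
The paper itself states this proposition without proof, so there is nothing to match your argument against; judged on its own, your proposal is a plan rather than a proof. The underlying criterion is fine: in a dextral symmetric right Leibniz algebra $[a,[b,b]]=0$ always, hence $[b,[a,b]]=0$ for all $a,b$, and Proposition \ref{sym} gives the stronger relations $[u,[v,w]]=-[v,[u,w]]$ (so in particular $[u,[u,w]]=0$ over $\mathbb C$). But the entire content of the proposition is the case check you defer: writing down the actual list of solvable extensions of $\mu_4$ and $\mu_5$ from \cite{rakh,elisa} and exhibiting a violating product for each isomorphism class and each parameter value. You acknowledge this is "the only step that requires care" --- it is also the only step that is the proof.

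Moreover, the specific witnesses you propose are not guaranteed to work. For the $\mu_5$-extensions you suggest $[x,[t,x]]$, which depends on the \emph{left} action of the adjoined element $t$; left actions in a Leibniz algebra can vanish identically on the nilradical (this happens in $\mathscr R_1$, which \emph{is} dextral symmetric), so non-nilpotency of the right action does not force $[t,x]\neq 0$. The robust witness there is $[x,[x,t]]=[x,d(x)]$: writing $d(x)=\alpha x+\beta y+\gamma z$ one computes $d(z)=3\alpha z$, so non-nilpotency of the derivation $d=R_t|_{\mu_5}$ forces $\alpha\neq 0$ and $[x,[x,t]]=\alpha y\neq 0$, contradicting $[u,[u,w]]=0$. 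For $\mu_4$ the analogous product $[x,[x,t]]$ vanishes identically (since $[x,x]=0$ and $[x,z]=0$ there), so a single product of the form $[b,[a,b]]$ does not suffice; one is led to combine the relation $[x,[y,t]]=-[y,[x,t]]$ (which forces the derivation to satisfy $a_1+b_2=0$ in the natural coordinates) with a further consequence of dextral symmetry such as Proposition \ref{leftnil2} to reach a contradiction. So the "companion combination of $x,y$" you gesture at is not routine, and the argument as sketched would not close without additional ideas.
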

	
	Following the discussions above, we have a complete characterization of a $4$-dimensional solvable dextral symmetric Leibniz algebra.
	
	\begin{theorem}
		A $4$-dimensional Leibniz algebra $\mathscr L$ is dextral symmetric if and only if $\mathscr L^{(3)}=0.$
	\end{theorem}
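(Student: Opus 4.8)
The plan is to dispatch the two implications separately, with essentially all of the content residing in the forward direction, where the statement is harvested from the case analysis already carried out in this section. It is worth noting at the outset that the general bound of Proposition \ref{leftnil2} only yields $\mathscr L^{(5)}=0$ for a $4$-dimensional dextral symmetric algebra, which is strictly weaker than the asserted $\mathscr L^{(3)}=0$; this is precisely why a finer, class-by-class inspection is needed.

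The reverse implication is immediate and dimension-independent. If $\mathscr L^{(3)}=0$, then, since $\mathscr L^{(3)}=[\mathscr L,[\mathscr L,\mathscr L]]$, every bracket of the shape $[a,[b,c]]$ vanishes. The hypothesis $[a,[b,c]]=0$ of the dextral symmetric condition therefore holds for all $a,b,c$, and its conclusion $[b,[a,c]]=0$ holds as well because that product is again of the shape $[\,\cdot\,,[\,\cdot\,,\,\cdot\,]]$; hence the implication defining dextral symmetry is satisfied vacuously and $\mathscr L$ is dextral symmetric.

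For the forward implication, let $\mathscr L$ be a $4$-dimensional dextral symmetric Leibniz algebra. By Corollary \ref{clss} it is solvable, so it appears in the classification of $4$-dimensional solvable Leibniz algebras, organized according to the isomorphism type of the nilradical. I would then run through exactly the cases settled above and read off $\mathscr L^{(3)}=0$ in each surviving class: when the nilradical is all of $\mathscr L$ (the right nilpotent case) the dextral symmetric members are the eighteen classes with $\mathscr N_i^{(3)}=0$, the classes $\mathscr N_7$--$\mathscr N_{10}$ having been ruled out; when the nilradical is two-dimensional, Theorem \ref{2dimnil} forces $\mathscr L\cong\mathscr R_1$, in which $[a,[b,c]]=0$; when the nilradical is $\mu_3(\alpha)$ we obtain $\mathscr L\cong\mathscr R_2$; when it is $\mu_1$ we obtain one of $\mathscr R_3(\beta),\mathscr R_4,\mathscr R_5$, each of left-nilpotency index $3$; and when it is $\mu_2$, $\mu_4$ or $\mu_5$ there is no dextral symmetric algebra at all. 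In each surviving algebra a direct glance at the defining brackets yields $\mathscr L^{(3)}=0$, which is the desired conclusion.

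The point that needs care, and the step I regard as the real obstacle, is the completeness of this enumeration: I must be certain that the nilradical of a $4$-dimensional solvable Leibniz algebra can only have dimension $2$, $3$ or $4$, and that every possible three-dimensional nilradical is one of $\mu_1,\dots,\mu_5$ (an abelian nilradical, should it occur, requiring separate treatment). Granting the cited structure theory of solvable Leibniz algebras this holds, and the theorem then follows simply by assembling the individual verifications $\mathscr L^{(3)}=0$ made class by class.
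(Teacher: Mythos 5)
Your proposal is correct and takes essentially the same route as the paper, which states this theorem without a separate proof as a summary of the preceding case-by-case classification (the eighteen right nilpotent classes with $\mathscr N_i^{(3)}=0$ and the algebras $\mathscr R_1,\dots,\mathscr R_5$), and your vacuous argument for the reverse direction is the intended one. The completeness issue you flag --- in particular the three-dimensional \emph{abelian} nilradical, which does occur (e.g.\ for $[x_i,x_4]=x_i$, $i=1,2,3$) and is absent from the list $\mu_1,\dots,\mu_5$ --- is a legitimate gap in the paper's own exposition rather than a defect of your argument.
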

	
	For every $n\in \mathbb{N}$, there exists an $n$-dimensional dextral symmetric Leibniz algebra $\mathscr L$ over any field satisfying $\mathscr L^{(3)}=0.$
	
	\begin{example}\cite[Example 2.2]{Towers}
		Let $\mathscr L$ be a Leibniz algebra spanned by basis elements $\{x_1,\ldots, x_{n-1},x_n\}$ with non-zero products given by
		\[[x_i,x_n]=x_i \text{ ~for each }i\in \{1,\ldots, n-1\}.\]
		Then, clearly $\mathscr L^{(3)}=0.$
	\end{example}
	
	We bring this paper to a close with an example of a $7$-dimensional Lie algebra to show that not every finite-dimensional dextral symmetric Leibniz algebra $\mathscr L$ will satisfy $\mathscr L^{(3)}=0.$
	
	\begin{example} \cite[Example 4.6]{ripan} 
		Let $\mathscr L$ be a $7$-dimensional Lie algebra over a field of characteristic 3. Consider the set $\{ x_1,x_2,x_3,x_4,x_5,x_6,x_7\}$ as the basis of $\mathscr L$ with multiplication given by
		\begin{align*}
			&[x_1,x_2]=x_4, \hspace{1.5cm} [x_1,x_3]=x_5, \hspace{1.5cm} [x_2,x_3]=x_6,
			\\&[x_1,x_6]=x_7,\hspace{1.5cm} [x_2,x_5]=-x_7, \hspace{1.2cm}  [x_3,x_4]=x_7.
		\end{align*}
		It can be verified that $\mathscr L$ is a dextral symmetric Lie algebra. However, $[x_1,[x_2,x_3]]=x_7 \neq 0.$
	\end{example}
	
	\section*{Acknowledgements}
	We express our heartfelt gratitude to Dr. Ardeline Mary Buhphang (NEHU, Shillong), Rishabh Goswami (NEHU, Shillong) and Dr. Ripan Saha (Raiganj University) for their valuable suggestions while preparing this manuscript. The first author would like to thank Prof. Samuel Lopes (University of Porto) for his insightful conversations during CIMPA School-2024, held in Philippines. We also thank Ivan Kaygorodov's (Federal University of ABC, Brazil) for identifying an error in our paper and bringing it into our attention.

\end{document}